\documentclass[a4paper, 11pt]{amsart}

\raggedbottom

\usepackage[utf8]{inputenc}
\usepackage{lmodern}
\usepackage{mathrsfs}
\usepackage[english]{babel}
\usepackage{graphicx}
\usepackage{color}

\usepackage{enumerate} 
\usepackage{mdwlist} 
\usepackage{microtype}

\usepackage[all]{xy}
\usepackage{tikz-cd}
\usepackage{verbatim}
\usepackage{amsmath,amsfonts,amssymb,amsthm} 
\usepackage{bm} 
\usepackage{hyperref} 
\usepackage{mathtools}

\theoremstyle{plain}
\newtheorem{theorem}{Theorem}[section]
\newtheorem{lemma}[theorem]{Lemma}
\newtheorem{proposition}[theorem]{Proposition}
\newtheorem{corollary}[theorem]{Corollary}
\theoremstyle{definition}
\newtheorem{definition}[theorem]{Definition}
\newtheorem{remark}[theorem]{Remark}
\newtheorem{example}[theorem]{Example}

\newcommand{\ZZ}{\mathbb{Z}} 
\newcommand{\QQ}{\mathbb{Q}} 
\newcommand{\RR}{\mathbb{R}} 
\newcommand{\VV}{\mathbb{V}} 
\newcommand{\PP}{\mathbb{P}} 
\renewcommand{\AA}{\mathbb{A}} 
\newcommand{\sheaf}[1]{\mathscr{#1}} 
\newcommand{\OO}{\sheaf{O}}  
\newcommand{\Gm}{\mathbb{G}_m} 

\newcommand{\kk}{\Bbbk}

\newcommand{\cL}{\mathcal{L}}

\newcommand{\cX}{\mathcal{X}}
\newcommand{\cZ}{\mathcal{Z}}
\newcommand{\Sym}{\mathrm{Sym}}
\newcommand{\SymXC}{\mathrm{Sym}^n(X[n]/C[n])}
\newcommand{\HilbXC}{\mathrm{Hilb}^n(X[n]/C[n])}

\newcommand{\lra}{\longrightarrow}
\newcommand{\wt}{\mathrm{wt}}

\DeclareMathOperator{\conv}{Conv}

\DeclareMathOperator{\Pic}{Pic}

\DeclareMathOperator{\Spec}{Spec}

\DeclareMathOperator{\Proj}{Proj}
\DeclareMathOperator{\Hilb}{Hilb}

\newcommand{\into}{\hookrightarrow}

\renewcommand{\geq}{\geqslant}
\renewcommand{\leq}{\leqslant}

\title[Relative VGIT]{Relative VGIT and an application to degenerations of Hilbert schemes}
\date{\today}

\begin{document}

\author[L.~H.~Halle]{Lars H. Halle}
\address{University of Copenhagen\\ 
Department of Mathematical Sciences\\
Universitetsparken 5\\ 
2100 Copenhagen\\ 
Denmark}
\curraddr{University of Bologna\\
Department of Mathematics\\
Piazza Porta S.~Donato 5\\
40126 Bologna\\
Italy}
\email{larshalvard.halle@unibo.it}

\author[K.~Hulek]{Klaus Hulek}
\address{Leibniz Universit\"at Hannover\\
Institut f\"ur algebraische Geometrie\\
Welfengarten 1\\
30167 Hannover\\
Germany}
\email{hulek@math.uni-hannover.de}

\author[Z.~Zhang]{Ziyu Zhang}
\address{ShanghaiTech University\\
Institute of Mathematical Sciences\\
393 Middle Huaxia Road\\
201210 Shanghai\\
P.R.China}
\email{zhangziyu@shanghaitech.edu.cn}

\keywords{Geometric Invariant Theory, degeneration, Hilbert scheme}

\subjclass[2010]{Primary: 14L24; Secondary: 14D06, 14C05, 14D23}

\thanks{Klaus Hulek  is grateful to DFG for partial support under grant Hu 337/7-1.}

\begin{abstract}
	We generalize the classical semi-continuity theorem for GIT (semi)stable loci under variations of linearizations to a relative situation of an equivariant projective morphism $X \to S$ over an affine base $S$. As an application to moduli problems, we consider degenerations of Hilbert schemes, and give a conceptual interpretation of the (semi)stable loci of the degeneration families constructed in \cite{GHH-2016}.
\end{abstract}

\maketitle

\section{Introduction}\label{sec:intro}

\subsection{Background}

Geometric invariant theory (GIT) is a very versatile method for constructing moduli spaces. Classical GIT concerns the action of a reductive group $G$ on a projective variety $X$ along with an 
ample line bundle $L$ on $X$. The action of $G$ on $L$ determines a semistable locus $X^{ss}$ on $X$, with a quotient $X^{ss}/G$ defined as the GIT quotient $X /\!/ G$ with respect to $L$. 
The main tool for determining the (semi)stability of a point in $X$ is the Hilbert-Mumford criterion, which reduces the problem to a purely numerical computation of the weights of the actions of 
some $1$-parameter subgroups of $G$ on the fibres of $L$ at fixed points. We recommend \cite{GIT} for a detailed account of the classical theory.

The GIT quotient $X/\!/G$ depends on the line bundle $L$ and the chosen $G$-linearization. This was explored in detail in the late 1990's by Dolgachev and Hu \cite{DH98} as well as by Thaddeus \cite{Tha-1996} extending
GIT to VGIT. Their results were further sharpened by Ressayre \cite{Res00}. These techniques have become essential tools in the study of the birational geometry of moduli spaces. 
One important aspect of the theory is the semi-continuity property of the GIT (semi)stable loci under a variation of the $G$-linearized ample line bundle $L$. Roughly speaking, let $L_t$ be 
a family of $G$-linearized ample line bundles on $X$, then for any $t$ sufficiently close to $0$, we have $$X^s(L_0) \subseteq X^s(L_t) \subseteq X^{ss}(L_t) \subseteq X^{ss}(L_0);$$ see \cite[Proposition 4]{Res00} 
and also \cite[Lemma 3.10]{Laza-2013}, or \cite[Theorem 1.6.2.2]{Sch08}. 

We point out that the classical GIT was sometimes also formulated for schemes $X/S$, where $S = \Spec (R)$ is an affine variety, together with an action of a reductive group $G/S$, for example in \cite{Sesh72} and \cite{Tha-1996}. This setup is different from the equivariant situation, which we will be discussing now.

Here we are concerned with a generalization of the classical GIT in the equivariant setting. Namely, we consider a $G$-equivariant projective morphism $X \to S$ over an affine base $S$, together with a $G$-linearized relative ample line bundle $L$ on $X$. 
In other words, the group $G$ is now also allowed to act non-trivially on $S$. Then the classical GIT for a projective variety corresponds to the case where $S=\Spec(k)$. 
In \cite{GHH-2015}, M.~G.~Gulbrandsen and the first two named authors generalized GIT techniques, in particular the Hilbert-Mumford numerical criterion, to this equivariant situation, which was then applied to construct a degeneration family of Hilbert schemes of points on surfaces in \cite{GHH-2016}.

We would like to mention that the case of an equivariant $G$-morphism $f: X \to Y$ has also been considered in the literature by various authors in  different settings: 
Reichstein \cite{Reich} treats equivariant morphisms $f\colon X\to Y$
between projective varieties, and Hu \cite{HuRel96} uses symplectic techniques to
study equivariant morphisms between quasi-projective varieties. 
The question studied there concerns the functoriality of the (semi)stable loci.
Both the semi-continuity and the functoriality of (semi)stable loci will be further developed in the present paper.

\subsection{Main results}

In the present paper, we will further generalize the above equivariant setting to allow $G$-linearized line bundles $L$ which are not necessarily ample. Indeed, we will study two different generalizations of the notion of (semi)stability with respect to $L$, both of which coincide with the classical counterpart when $L$ is ample.

The first generalization is called \emph{weak (semi)stability}; see Definition \ref{def:new}. We will show that the loci of weakly (semi)stable points satisfy nice functorial properties along two classes of projective morphisms; see Lemmas \ref{lem:quot} and \ref{lem:fun} for the precise statements. 

The second generalization is called \emph{numerical (semi)stability}, which was introduced previously in \cite{CJR14}; see Definition \ref{def:numericalsemistable}. We will show that the classical semi-continuity result in VGIT can be generalized to allow polarizations that are not necessarily ample, if we replace the classical (semi)stability by numerical (semi)stability. More precisely, we will prove

\begin{theorem}[Theorem \ref{thm:maincont}]
	\label{thm:mainintro}
	Let $f \colon X \to S$ be a projective morphism over an affine variety $S$, and $G$ a reductive 
	group acting equivariantly on $X$ and $S$. Let $L_0$ and $L_1$ be $G$-linearized line bundles on $X$ and $L_t = L_0^{1-t} \otimes L_1^t$ for every $0 \leqslant t \leqslant 1$. 
	Assume that $L_t$ is ample for $0 < t \leqslant 1$. Then we have
	$$ X^{ns}(L_0) \subseteq X^s(L_t) \subseteq X^{ss}(L_t) \subseteq X^{nss}(L_0) $$
	for $0 < t \ll 1$.
\end{theorem}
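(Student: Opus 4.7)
The plan is to reduce the three inclusions to numerical sign comparisons via the relative Hilbert--Mumford criterion of \cite{GHH-2015} and then establish uniformity in $t$ through a finiteness statement. By hypothesis $L_0$ satisfies the Hilbert--Mumford criterion, and for $0 < t \leqslant 1$ the line bundle $L_t$ is relatively ample, so the same criterion applies. Hence, in every case, a point $x \in X$ is $L$-(semi)stable if and only if $\mu^L(x,\lambda) \geqslant 0$ (respectively $> 0$) for every nontrivial 1-parameter subgroup $\lambda \colon \Gm \to G$ for which $\lim_{s \to 0} \lambda(s) \cdot x$ exists in $X$. The structural fact I would exploit is the linearity of the weight in $L$:
$$ \mu^{L_t}(x,\lambda) \;=\; (1-t)\,\mu^{L_0}(x,\lambda) + t\,\mu^{L_1}(x,\lambda). $$

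Granted this, the middle inclusion $X^s(L_t) \subseteq X^{ss}(L_t)$ is automatic, while the outer two become pointwise sign arguments. For $X^s(L_0) \subseteq X^s(L_t)$, let $x \in X^s(L_0)$ and $\lambda$ be relevant: then $\mu^{L_0}(x,\lambda)$ is a positive integer, $\mu^{L_1}(x,\lambda)$ is some fixed integer, and the convex combination above is positive whenever $t < \mu^{L_0}(x,\lambda)/\bigl(\mu^{L_0}(x,\lambda) - \mu^{L_1}(x,\lambda)\bigr)$ (or for all $t \in [0,1]$, if the denominator is non-positive). Dually, if $x \notin X^{ss}(L_0)$ then some $\lambda$ gives $\mu^{L_0}(x,\lambda) < 0$, and the analogous estimate yields $\mu^{L_t}(x,\lambda) < 0$ for $t$ small, proving $X^{ss}(L_t) \subseteq X^{ss}(L_0)$.

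The main obstacle, and the core technical step, is to make the threshold ``sufficiently small $t$'' uniform in $(x,\lambda)$. In the classical absolute setting this is immediate from Mumford's finiteness: only finitely many integer values of $\mu^L(\cdot,\cdot)$ occur, the projectivity of $X$ providing the required compactness. In the relative setting $X$ is only quasi-projective, so a substitute is required. I expect to produce one by observing that both weights depend only on the limit point $x_0 = \lim_{s \to 0}\lambda(s)\cdot x$ and on the induced $\Gm$-action on the fibres $(L_i)_{x_0}$, thereby reducing the question to a finite list of sign patterns cut out by a $G$-equivariant stratification of $X$ over $S$; the affineness of $S$ together with the properness of $X \to S$ should make such a finite stratification available, via fibrewise reduction and $G$-equivariance. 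Once this relative Mumford-finiteness input is in hand, one takes $\varepsilon > 0$ to be the minimum of the finitely many positive thresholds it yields, and the theorem follows for $0 < t < \varepsilon$.
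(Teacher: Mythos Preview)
Your strategy is sound and close in spirit to the paper's, but two points deserve sharpening.

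First, your appeal to ``Mumford's finiteness: only finitely many integer values of $\mu^L(\cdot,\cdot)$ occur'' is not literally correct even in the absolute case: since $\mu^L(x,\lambda^n)=n\,\mu^L(x,\lambda)$, the values are unbounded as $\lambda$ varies. What one actually needs is that the \emph{function} $\lambda\mapsto\mu^L(x,\lambda)$, equivalently the normalized quantity $M^L(x)=\inf_\lambda \mu^L(x,\lambda)/\lVert\lambda\rVert$, takes only finitely many values as $x$ ranges over $X$. That is what the stratification you allude to is meant to deliver, and the paper does construct exactly such a finite stratification $X=\bigcup_{I,J} X_I^J$ on which $\mu^L(x,\cdot)$ is constant (Lemma~\ref{lem:mu-value}); this is the substantial technical core of \S\ref{subsec:finite-ss}. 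With that in hand your uniform threshold argument can indeed be completed, for instance via the concavity estimate $M^{L_t}(x)\geqslant (1-t)M^{L_0}(x)+tM^{L_1}(x)$ together with the finiteness of the values of $M^{L_0}$ and $M^{L_1}$.

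Second, and more interestingly, the paper packages the same stratification differently. Rather than bounding the weight ratios over all $(x,\lambda)$, it deduces from the stratification that there are only finitely many subsets of $X$ that can occur as $X^{ss}(L)$ or $X^s(L)$ for \emph{any} ample linearization $L$ (Proposition~\ref{prop:fin-ss2}). The semi-continuity proof then proceeds by a witness-point argument: for each of the finitely many candidate stable loci $X^s_i$ not containing $X^s(L_0)$, pick $x_i\in X^s(L_0)\setminus X^s_i$, and use continuity of $t\mapsto M^{L_t}(x_i)$ at the single point $x_i$ to rule out $X^s_i$ for small $t$. This avoids uniform control over all $x$ simultaneously and makes the dependence on the two key ingredients (continuity of $M^\bullet(x)$, Proposition~\ref{prop:M-cont}; finiteness of loci, Proposition~\ref{prop:fin-ss2}) completely transparent. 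Your direct approach buys a more explicit bound on $\varepsilon$; the paper's buys a cleaner logical structure and sidesteps the need to bound $M^{L_1}$ uniformly.
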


The proof of the theorem mainly follows the idea in \cite{DH98,Res00}. It requires two main ingredients, which were both established only for a single projective variety $X$ in \cite{DH98}, hence require some extra work in the relative setting. The first ingredient is the continuity of the $M$-function, as defined in \eqref{eqn:M-func}, with respect to the choice of the line bundle; see Proposition \ref{prop:M-cont}. The second ingredient states that for a fixed $G$-action on $X$ with respect to different $G$-linearized ample line bundles, there are only finitely many possible semistable loci; see Proposition \ref{prop:fin-ss2}. Both results may be of independent interest, and we combine them with the relative Hilbert-Mumford criterion proved in \cite{GHH-2015} to conclude Theorem \ref{thm:mainintro}.

\subsection{Applications}

We would like to mention an application of the above results, which in fact was our original motivation for studying the functoriality and semi-continuity of (semi)stable loci with respect to both weaker versions of (semi)stability in the relative setting.

Our aim was to construct good degenerations of Hilbert schemes of points, which was carried out 
 in \cite{GHH-2016}. More precisely, let $$f \colon X \longrightarrow C$$ be a projective strict simple degeneration over an affine curve $C$. By Jun Li's technique of expanded degenerations (see e.g. \cite{li-2013}), 
 we can obtain an expanded family $$f[n] \colon X[n] \longrightarrow C[n]$$ with an equivariant action of $G[n] = (\Gm)^n$. The relative Hilbert scheme of points $$\Hilb^n(X[n]/C[n]) \longrightarrow C[n]$$ is also 
 equipped with a $G[n]$-action. By carefully choosing a suitable $G[n]$-linearized ample line bundle, we can obtain a GIT quotient $$I^n_{X/C} = \Hilb^n(X[n]/C[n])/\!/G[n]$$ which completes 
 the family $\Hilb^n(X/C) \to C$ over the open subset of $C$ where $f$ is smooth. We proved that this is a degeneration of Hilbert schemes with good geometric properties, 
 see e.g. \cite[Theorem 5.9, Corollary 5.16]{GHHZ-2018}.
A crucial point of this approach was the detailed analysis of the (semi)stable locus $\Hilb^n(X[n]/C[n])^{ss}$. Intriguingly, we discovered that the property whether a point $[Z] \in \Hilb^n(X[n]/C[n])$ is (semi)stable or not 
only depends on its underlying $0$-cycle; see \cite[Theorem 2.9]{GHH-2016}. In the present paper, we will combine Theorem \ref{thm:mainintro} and Lemma \ref{lem:fun} to give 
a conceptual explanation of this fact; see Corollary \ref{cor:pullbackagree} for details.

\subsection{Structure of the paper}

The paper is organized as follows. In Section \ref{sec:generalresults} we give our definition of the weak (semi)stability of a point on a variety $X$ under the action of a reductive group $G$ via a linearization $L$, 
and prove functoriality of the (semi)stable loci along certain types of projective morphisms. In Section \ref{sec:relVGIT}, we first recall the definition of the numerical (semi)stability, then 
establish the two main ingredients required for Theorem \ref{thm:mainintro}. Their proofs in the case of torus actions are given in \S \ref{subsec:contM} and \S \ref{subsec:finite-ss} respectively, and in 
the case of general reductive group actions in \S \ref{subsec:reductive}. We prove Theorem \ref{thm:mainintro} and two other variants of it in \S \ref{subsec:mainproof}. Finally, Section \ref{sec:application} 
is devoted to the main application of Theorem \ref{thm:mainintro}. After recalling the necessary notations required in the setting of expanded degenerations in \S \ref{subsec:appsetup}, we 
explain in \S \ref{subsec:ExpStep1} and \S \ref{subsec:ExpStep2} in two steps why the (semi)stability for a point in $\Hilb^n(X[n]/C[n])$ only depends on the underlying cycle.

After the first version of our paper was published on arXiv, Alexander H. W. Schmitt \cite{Schpriv} informed us that he has since obtained an independent proof of Theorem \ref{thm:mainintro}.

\subsection*{Acknowledgements}
The authors would like to thank Alexander H. W. Schmitt for pointing out the reference \cite{Sch08}.	Klaus Hulek would like to thank S\'andor Kov\'acs, Thomas Peternell and Charles Vial for 
discussions on higher dimensional birational geometry and algebraic cycles. We are also grateful to the anonymous referee for several suggestions for improving the presentation of the paper, and 
in particular, for pointing out to us the reference \cite{CJR14}.


\section{Linearizations and functoriality}\label{sec:generalresults}

Geometric invariant theory is an extremely powerful tool for taking quotients. Let $G$ be a reductive group acting on a scheme $X$, such that the action can be lifted to a line bundle $L$ 
on $X$. By \cite[\S 1.4]{GIT}, one can define the loci of (semi)stable points and form a GIT quotient $X /\!/_L G$ with reasonably nice properties. Although the classical results such 
as \cite[Theorem 1.10]{GIT} hold without any ampleness assumption, the theory is particularly successful for projective schemes with $G$-linearized ample line bundles. 

However, for the application that we have in mind, we will need to consider (semi)stability with respect to some \emph{semi-ample} linearizations. The classical notion of (semi)stability has its own drawback 
when it comes to functoriality. The following simple example, which is in the same spirit as \cite[Remark 4.4]{CJR14}, illustrates the failure of functoriality under the classical notion.

\begin{example}\label{ex:example1}
Let $\Gm$ act on $(\PP^1, \OO(1))$ in the standard way; namely, $\lambda \cdot [x_0:x_1] = [x_0:\lambda x_1]$. Let $\Gm$ act on $(\PP^2, \OO(1))$ trivially. Then we get an action 
of $\Gm$ on $(\PP^1 \times \PP^2, \OO(1,1))$. The stable locus is $(\PP^1 \backslash \{ 0, \infty \}) \times \PP^2$ with quotient $\PP^2$.

Let $\pi: W \to \PP^1 \times \PP^2$ be the blowup along $\PP^1 \times \{p\}$, where $p \in \PP^2$ is any closed point. Then $W$ still carries a $\Gm$-action on the first factor and $L = \pi^*\OO(1,1)$ carries 
a $\Gm$-linearization. Let $E$ be the exceptional divisor of $\pi$. We claim that no point in $E$ is semistable in the usual sense. Indeed, $E$ is a trivial $\PP^1$ bundle along the centre of the blowup. 
Let $F$ be any such $\PP^1$-fibre. Then for any $\Gm$-invariant section $s$ of a given tensor power of $L$, $s$ is constant along $F$. If $s$ does not vanish along $F$, then the 
entire $F \subseteq W_s$, where $W_s = W \backslash s^{-1}(0)$. This implies that $W_s$ cannot be affine, hence all points in $F$ are unstable.

This means that the entire exceptional divisor $E$ is excluded from the semistable locus. Indeed, the semistable locus with respect to the above linearization is isomorphic 
to $(\PP^1\backslash\{0, \infty\}) \times (\PP^2 \backslash \{p\})$ via $\pi$, so we have $W^{ss} \subsetneq \pi^{-1}((\PP^1 \times \PP^2)^{ss})$, and the GIT quotient $W /\!/_L \Gm = \PP^2 \backslash \{p\}$.
\end{example}

\begin{remark}
	In fact, Schmitt also carefully studied the failure of the functoriality of semistable loci under a very similar setup in \cite{Sch19}. He considered the projection morphism $\pi_Y: \PP^n \times Y \to Y$ 
	for some quasi-projective scheme $Y$, and observed that even under a very natural choice of ample linearizations on $Y$ and $\PP^n \times Y$, neither of $(\PP^n \times Y)^{ss}$ and $\pi_Y^{-1}(Y^{ss})$ 
	contains the other.
\end{remark}

We will see that the failure of functoriality is not ideal for our application. Indeed, we will show that, for the family of expanded degenerations considered in Section \ref{sec:application}, the (semi)stable 
locus of the relative Hilbert scheme is precisely the preimage of the (semi)stable locus of the relative symmetric product under the Hilbert-Chow morphism; see Proposition \ref{prop:ChangeSpace}. 
However, the failure of functoriality makes it harder for us to achieve such a result. In order to obtain some reasonable functorial properties, we modify the classical definition of (semi)stability as follows:

\begin{definition}
\label{def:new}
Assume that a reductive group $G$ acts on a quasi-projective scheme $X$ over an algebraically closed field $\kk$. Let $L$ be a $G$-linearized line bundle on $X$. Then a point $x \in X$ is said to be
\begin{itemize}
\item \emph{weakly semistable} if $x \in X_s$ for a $G$-invariant section $s \in \Gamma(X,L^{\otimes n})^G$, where $n$ is a positive integer and $X_s = X \backslash s^{-1}(0)$; 
\item \emph{weakly stable} if $x$ is weakly semistable, 
$G_x$ is finite, and $G \cdot x$ is closed in the semistable locus of $X$, where $G_x$ is the stabilizer of $x$, and $G \cdot x$ is the $G$-orbit containing $x$.
\end{itemize}
We denote the locus of weakly semistable and weakly stable points by $X^{wss}(L)$ and $X^{ws}(L)$ respectively.
\end{definition}

Comparing our definition with the classical definition \cite[Definition 1.7]{GIT} of (semi)stability, the only difference is that we dropped the condition which requires $X_s$ to be affine, therefore our 
definition is weaker, and allows more (semi)stable points. If $X$ is projective over an affine scheme $S$ and if $L$ is an \emph{ample} line bundle, then it is easy to see that the missing condition is 
automatically satisfied, hence the weak (semi)stability is equivalent to the classical (semi)stability in this case.
However, the difference of the two notions could be significant in general. Think of the following extreme example: let $X$ be a projective scheme with the action of a reductive group $G$; let $L = \OO_X$ 
be the trivial line bundle with the trivial $G$-linearization, namely, constant functions are $G$-invariant sections. Then no point is semistable, because $X_s$ is never affine for any invariant section $s$; but 
every point is weakly semistable, because a non-zero constant section would meet the requirement for weak semistability for every point.

In the above definition, since $X_s$ is not required to be affine any more, we are no longer able to obtain a global quotient of $X$ by gluing quotients of invariant affine charts. Therefore this notion is not 
suitable for constructing quotients (unless $L$ is ample). Instead, for our application in degenerations of Hilbert schemes, we will mainly use it for an auxiliary purpose; namely as a bridge to relate the (semi)stable 
locus on the Hilbert scheme with respect to an ample polarization to the (semi)stable locus on the symmetric product with respect to another ample polarization. The main advantage which allows us to 
do so is the strict functoriality of weak (semi)stable loci along some projective morphisms in both of the following results

\begin{lemma}
	\label{lem:quot}
	Let $W$ be a quasi-projective scheme over an algebraically closed field $\Bbbk$ of characteristic $0$. Let $H$ be a finite group acting on $W$, and $f: W \to V$ the quotient morphism. 
	Assume $G$ acts on both $W$ and $V$, such that the actions of $G$ and $H$ on $W$ commute. Assume further that $f$ is $G$-equivariant. Let $L$ be a $G$-linearized line bundle on $V$. Then we have
	\begin{align*}
		W^{wss}(f^*L) &= f^{-1}(V^{wss}(L)); \\
		W^{ws}(f^*L) &= f^{-1}(V^{ws}(L)).
	\end{align*}
\end{lemma}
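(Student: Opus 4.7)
The plan is to handle the two equalities separately, proving the semistable one first and then bootstrapping to the stable case. Throughout, I will use that $f$ is a finite morphism (being a quotient by the finite group $H$) and that, because the $G$- and $H$-actions commute, the $H$-translate of a $G$-invariant section of $f^*L^{\otimes n}$ is again $G$-invariant.

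For the semistable equality, the inclusion $f^{-1}(V^{ss}(L)) \subseteq W^{ss}(f^*L)$ is immediate: pulling back a $G$-invariant section $\bar s \in \Gamma(V,L^{\otimes n})^G$ with $\bar s(v) \neq 0$ gives $f^*\bar s \in \Gamma(W, f^*L^{\otimes n})^G$ with $(f^*\bar s)(w) = \bar s(v) \neq 0$ for any $w \in f^{-1}(v)$. For the reverse inclusion, suppose $w \in W^{ss}(f^*L)$ with witnessing $G$-invariant section $s'$. Because $G$ and $H$ commute, each translate $h \cdot s'$ is again $G$-invariant and satisfies $(h \cdot s')(h \cdot w) = s'(w) \neq 0$, so every point of the finite orbit $H \cdot w$ is $f^*L$-semistable. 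Passing to a common tensor power and taking a suitably generic $\kk$-linear combination (possible since $\kk$ is infinite), I obtain a single $G$-invariant section $\tilde s \in \Gamma(W, f^*L^{\otimes m})^G$ that is nonvanishing on all of $H \cdot w$. The norm $N := \prod_{h \in H} h \cdot \tilde s$ is then $G\times H$-invariant and satisfies $N(w) \neq 0$; since finite quotients are compatible with taking sections (via the projection formula, $\Gamma(W, f^*L^{\otimes m|H|})^H = \Gamma(V, L^{\otimes m|H|})$), the section $N$ descends to a $G$-invariant section $\bar N \in \Gamma(V, L^{\otimes m|H|})^G$ with $\bar N(v) \neq 0$, proving $v \in V^{ss}(L)$.

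For the stable equality, first assume $v \in V^s(L)$ and $w \in f^{-1}(v)$. The semistable case already gives $w \in W^{ss}(f^*L)$. The stabilizer $G_w$ sits inside $G_v$ by the $G$-equivariance of $f$, and is therefore finite. To see that $G \cdot w$ is closed in $W^{ss}(f^*L)$, take any $w' \in \overline{G \cdot w} \cap W^{ss}(f^*L)$; its image $f(w') \in \overline{G \cdot v} \cap V^{ss}(L) = G \cdot v$ forces $w' \in f^{-1}(G \cdot v) = H \cdot G \cdot w$, a finite union of $G$-orbits each of dimension $\dim G \cdot w$ (since the $H$-translates preserve $G$-orbit dimension, using commutation). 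The boundary $\overline{G \cdot w} \setminus G \cdot w$ has strictly smaller dimension, so it cannot contain any full $G$-orbit of dimension $\dim G \cdot w$; thus $w' \in G \cdot w$. Conversely, if $w \in W^s(f^*L)$, the orbit-stabilizer count $|G_v| = |G_w| \cdot |G_v \cdot w|$ combined with the containment $G_v \cdot w \subseteq H \cdot w$ (which follows from $G$-equivariance of $f$) gives $|G_v| \leqslant |G_w| \cdot |H| < \infty$, while $G \cdot v = f(G \cdot w)$ is closed in $V^{ss}(L)$ because $f \colon W^{ss}(f^*L) \to V^{ss}(L)$ is finite, hence closed.

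The main obstacle is the descent step in the semistable $\subseteq$ direction: the naive norm $\prod_h h \cdot s'$ of the original witnessing section need not be nonvanishing at $w$, since $s'$ itself could vanish somewhere on the $H$-orbit $H \cdot w$. The pre-averaging step producing $\tilde s$ nonvanishing on all of $H \cdot w$ is therefore essential before applying the classical norm construction. Both the commutativity of the $G$- and $H$-actions (ensuring $H$-translates of $G$-invariants remain $G$-invariant) and the infiniteness of $\kk$ (guaranteed by the characteristic-zero hypothesis, used to form sufficiently generic linear combinations) are crucial here.
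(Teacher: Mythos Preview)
Your argument is correct. The paper itself does not give a proof but simply refers to Lemma~3.5 of \cite{GHHZ-2018}; your write-up is the standard finite-quotient argument and is almost certainly what that reference contains. Two small presentational points: (i) in the semistable~$\subseteq$ direction the sections $h\cdot s'$ already live in the same tensor power $f^*L^{\otimes n}$, so the phrase ``passing to a common tensor power'' is unnecessary; (ii) in deducing that $G_v$ is finite you invoke the orbit--stabilizer count $|G_v|=|G_w|\cdot|G_v\cdot w|$ as if $|G_v|$ were already known to be finite --- it is cleaner to first argue on dimensions ($\dim G_v=\dim G_w+\dim(G_v\cdot w)=0+0$) and only then use cardinalities. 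Neither point affects the validity of the proof.
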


\begin{proof}
	The proof is literally the same as that of \cite[Lemma 3.5]{GHHZ-2018}.
\end{proof}

\begin{lemma}
\label{lem:fun}
Assume $W$ and $V$ are schemes of finite type over a field $\Bbbk$ of characteristic $0$. Let $f: W \to V$ be a $G$-equivariant projective morphism and $L$ a $G$-linearized line bundle on $V$. 
We furthermore assume that
\begin{itemize}
	\item $V$ is irreducible and normal;
	\item the fibres of $f$ are connected;
	\item there exist $G$-equivariant open subsets $U_W \subseteq W$ and $U_V \subseteq V$, such that the complement of $U_V$ in $V$ is of codimension at least $2$, and the 
	restriction $f|_{U_W}: U_W \to U_V$ is an isomorphism.
\end{itemize}
Then we have
\begin{align*}
W^{wss}(f^*L) &= f^{-1}(V^{wss}(L)); \\
W^{ws}(f^*L) &= f^{-1}(V^{ws}(L)).
\end{align*}
\end{lemma}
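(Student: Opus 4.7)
The strategy is to reduce both equalities to the statement $f_*\OO_W = \OO_V$, which makes pullback along $f$ an equivalence on $G$-invariant global sections. To establish $f_*\OO_W = \OO_V$, I apply the Stein factorization of the proper morphism $f$, obtaining $f = g \circ h$ with $h \colon W \to W' := \Spec_V(f_*\OO_W)$ satisfying $h_*\OO_W = \OO_{W'}$ and $g \colon W' \to V$ finite. The assumption that $f$ has connected fibres forces $g$ to be radicial, since a fibre $f^{-1}(v) = h^{-1}(g^{-1}(v))$ decomposes as a disjoint union over the geometric points of $g^{-1}(v)$; and the isomorphism $U_W \to U_V$ together with the codimension-two hypothesis shows that $g$ is birational. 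A finite birational morphism onto a normal variety is an isomorphism by Zariski's main theorem, hence $g$ is an isomorphism and $f_*\OO_W = \OO_V$. Combining this with the projection formula gives $f_*(f^*L^{\otimes n}) = L^{\otimes n}$, and thus a $G$-equivariant bijection $\Gamma(V, L^{\otimes n}) \to \Gamma(W, f^*L^{\otimes n})$ for every $n$; the equality $W^{ss}(f^*L) = f^{-1}(V^{ss}(L))$ then follows directly from Definition \ref{def:new}.

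For the stable loci, first observe that $W^{ss} = f^{-1}(V^{ss})$ combined with the projectivity of $f$ implies that $f|_{W^{ss}} \colon W^{ss} \to V^{ss}$ is proper. Assume $w \in W^s(f^*L)$. Then $f(w) \in V^{ss}$, and $G \cdot f(w) = f(G \cdot w)$ is closed in $V^{ss}$ as the image of a closed set under a proper map. To see that $G_{f(w)}$ is finite, note that $G_{f(w)} \cdot w = G \cdot w \cap f^{-1}(f(w))$ is closed in the projective fibre $f^{-1}(f(w))$, and hence projective; but as a homogeneous space $G_{f(w)}/G_w$ it is the quotient of an affine group by a finite subgroup, and therefore affine. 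A scheme of finite type over $\kk$ that is both affine and proper must be finite, forcing $G_{f(w)}$ to be finite.

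Conversely, if $f(w) \in V^s(L)$, then $w \in W^{ss}$ and $G_w \subseteq G_{f(w)}$ is finite, so it remains to show $G \cdot w$ is closed in $W^{ss}$. The closure of $G \cdot w$ in $W^{ss}$ maps into $G \cdot f(w)$ (which is closed in $V^{ss}$), so it is contained in $f^{-1}(G \cdot f(w)) \cap W^{ss}$. Every $G$-orbit in this preimage has stabilizer contained in a conjugate of the finite group $G_{f(w)}$, and therefore has full dimension $\dim G$; the standard fact that the boundary of an orbit closure consists of orbits of strictly smaller dimension then forces this boundary to be empty, so $G \cdot w$ is closed. The main obstacle in this plan is the finiteness argument for $G_{f(w)}$, which crucially combines the projectivity of the fibres of $f$ with the affineness of quotients of algebraic groups by finite subgroups; the $f_*\OO_W = \OO_V$ step is standard but is what forces the codimension-two and normality hypotheses into the story.
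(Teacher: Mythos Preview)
Your argument for the stable loci is correct and essentially the same as the paper's, though packaged a bit more efficiently: the paper goes through an explicit smoothness/Zariski's Main Theorem argument to identify $G_v/G_w$ with the fibre $h^{-1}(v)$, whereas you simply observe that $G_{f(w)}\cdot w = G\cdot w \cap f^{-1}(f(w))$ is closed in the projective fibre and hence proper, while also being affine as a finite-group quotient of the affine group $G_{f(w)}$. Both reach the same conclusion.

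For the semistable loci, your route via Stein factorization and $f_*\OO_W=\OO_V$ is different from the paper's and is conceptually clean, but it has a genuine gap under the stated hypotheses. The lemma only assumes $W$ is a scheme of finite type, not that it is reduced. Your step ``finite birational onto a normal variety is an isomorphism'' needs the source $W'=\Spec_V(f_*\OO_W)$ to be reduced (so that $\Spec B$ irreducible and $B$ reduced force $B$ to be a domain inside $\mathrm{Frac}(A)$). If $W$ carries an embedded component over the codimension~$\geq 2$ locus, this fails: take $V=\AA^2$ and $W=\Spec \kk[x,y,\epsilon]/(x\epsilon,y\epsilon,\epsilon^2)$ with $f$ the obvious projection. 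All three hypotheses of the lemma hold (with $U_V=\AA^2\setminus\{0\}$), the fibres are connected, yet $f_*\OO_W\neq\OO_V$ and the pullback map on sections is not surjective.

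The paper sidesteps this entirely: rather than asserting a bijection on global sections, it only shows that for each $G$-invariant $s\in\Gamma(W,f^*L^{\otimes n})$ with $s(w)\neq 0$ there exists a $G$-invariant $t\in\Gamma(V,L^{\otimes n})$ with $t(f(w))\neq 0$. One restricts $s$ to $U_W\cong U_V$, extends over $V$ by normality and codimension~$\geq 2$, and then uses surjectivity of $\overline{U_W}\to V$ together with connectedness of fibres (and triviality of $f^*L$ along them) to check non-vanishing. Crucially, the paper never claims $f^*t=s$ globally --- they agree only on $\overline{U_W}$ --- and this is exactly what makes the argument robust against non-reduced structure on $W$. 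Your approach becomes valid as soon as one adds the (harmless, and satisfied in the application) hypothesis that $W$ is reduced.
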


\begin{proof}
We first prove the equality for semistable loci. 

The inclusion $W^{wss}(f^*L) \supseteq f^{-1}(V^{wss}(L))$ is obvious. Let $v \in V^{wss}(L)$ and $w \in f^{-1}(v)$. Then there exists a section $t \in \Gamma(V, L^{\otimes n})^G$ with $t(v) \neq 0$. 
Hence we have $f^*t \in \Gamma(W, f^*L^{\otimes n})^G$ with $(f^*t)(w) \neq 0$. 

For the inclusion $W^{wss}(f^*L) \subseteq f^{-1}(V^{wss}(L))$, let $w \in W^{wss}(f^*L)$ and $v = f(w)$. There exists a section $s \in \Gamma(W, f^*L^{\otimes n})^G$ with $s(w) \neq 0$. By identifying $U_W$ 
and $U_V$, the restriction $s|_{U_W}$ defines a section $s' \in \Gamma(U_V, L^{\otimes n})^G$. By normality we can extend $s'$ to a section $t \in \Gamma(V, L^{\otimes n})^G$, which is also $G$-invariant 
by the uniqueness of the extension. We claim that $t(v) \neq 0$. Assume on the contrary that $t(v) = 0$. It is easy to see that $f|_{\overline{U}_W}: \overline{U}_W \to V$ is surjective. 
Let $w' \in \overline{U}_W$ such that $f(w') = v$. By construction $(f^*t)|_{U_W} = s|_{U_W}$, hence $(f^*t)|_{\overline{U}_W} = s|_{\overline{U}_W}$. It follows that $s(w') = (f^*t)(w') = t(v) = 0$. 
On the other hand, since the fibres of $f$ are connected, and the restriction of $f^*L$ on each fibre is trivial, we have $s(w) = s(w') = 0$, a contradiction.

Now we prove the equality for the stable loci. For any $w \in W^{wss}(f^*L)$, we write $v = f(w)$, then by the first statement $v \in V^{wss}(L)$. We will show that $w$ is weakly stable in $W$ if and only if $v$ is 
weakly stable in $V$.

Let $w \in W^{ws}(f^*L)$. We need to show that $v \in V^{ws}(L)$. For simplicity we write $f^{wss}: W^{wss}(f^*L) \to V^{wss}(L)$ for the restriction of $f$ on the weakly semistable locus. Then the projectivity of $f$ 
implies that $f^{wss}$ is closed. Since the orbit $G \cdot w$ is closed in $W^{wss}(f^*L)$, it follows that $G \cdot v = f^{wss} (G \cdot w)$ is closed in $V^{wss}(L)$. Now we consider the commutative diagram
\begin{equation}
\label{eqn:orbdiag}
\xymatrix{
G \times \{w\} \ar[r] \ar[d]_{\cong} & G \cdot w \ar[d]^{h:=f|_{G \cdot w}} \\
G \times \{v\} \ar[r] & G \cdot v.
}
\end{equation}
By \cite[Proposition 8.3]{Hum75}, the orbits $G \cdot w$ and $G \cdot v$ are smooth, hence the morphism $h:=f|_{G \cdot w}$ is generically smooth. Furthermore $G$ acts transitively on $G \cdot w$, hence $h$ is 
smooth. In particular, $h^{-1}(v)$ is smooth. Moreover it is also projective since $f$ is a projective morphism.

We pull back the diagram \eqref{eqn:orbdiag} along the inclusion $\{v\} \into G \cdot v$ to get
\begin{equation*}
\xymatrix{
G_v \times \{w\} \ar[r]^-{\varphi} \ar[d]_{\cong} & h^{-1}(v) \ar[d]^{h|_{h^{-1}(v)} = f|_{h^{-1}(v)}} \\
G_v \times \{v\} \ar[r] & \{v\}
}
\end{equation*}
where $G_v$ denotes the stabilizer of $v$ in $G$. Notice that the action map $\varphi$ factors through the quotient group
\begin{equation*}
\xymatrix{
G_v \times \{w\} \ar[rd]_{\alpha} \ar[rr]^{\varphi} & & f^{-1}(v) \\
 & G_v/G_w \times \{w\}, \ar[ru]_{\beta} & 
}
\end{equation*}
where $G_v / G_w = \{ gG_w \mid g \in G_v \}$ is a smooth quasi-projective variety; see \cite[Theorem 4.4.1]{Bia02}. It is clear that $\beta$ is a bijection of points. Since $f^{-1}(v)$ is smooth, we conclude 
by Zariski's main theorem that $\beta$ is an isomorphism, hence $G_v/G_w$ is a projective variety. By assumption $w$ is a stable point, hence $\alpha$ is a finite group quotient which is a finite morphism. 
Thus $G_v$ is proper. However, as a closed subvariety of an affine variety $G$, $G_v$ is affine itself. It follows that $G_v$ is finite. 

For the other inclusion, assume $w \in W^{wss}(f^*L)$ and $v = f(w) \in V^{ws}(L)$. We claim that $G \cdot w$ must be closed in $W^{wss}(f^*L)$. Otherwise, there is another orbit $G \cdot w'$ in the closure 
of $G \cdot w$, which also maps to $G \cdot v$ under $f$. Moreover, $\dim (G \cdot w') < \dim (G \cdot w) \leqslant \dim G = \dim (G \cdot v)$, which is a contradiction. To show $G_w$ is finite, it suffices 
to realize that $G_w$ is a subgroup of $G_v$ which is finite itself. We conclude that $w$ is weakly stable. This finishes the proof.
\end{proof}


\section{Relative VGIT}\label{sec:relVGIT}

The main goal for this section is to prove the semi-continuity result Theorem \ref{thm:maincont}. After defining the numerical (semi)stability in \S \ref{subsec:NumCritStab}
and presenting some useful preliminary results in \S \ref{subsec:relHilbMum}, we will establish two key ingredients, which will be proven for tori in \S \ref{subsec:contM} and \S \ref{subsec:finite-ss}, and 
generalized to arbitrary reductive groups in \S \ref{subsec:reductive}. The proof of the main result will be given in \S \ref{subsec:mainproof}.

\subsection{General notation}\label{subsec:vgit-assump}

We state some assumptions and notations which are valid throughout this section.

Let $\kk$ be an algebraically closed field. Let $S$ be an affine scheme of finite type over $\kk$, and $X$ a projective $S$-scheme with structure morphism
$$ f: X \longrightarrow S. $$
Let $G$ be a reductive group acting equivariantly on $X$ and $S$. The $G$-fixed loci are denoted by $X^G$ and $S^G$ respectively. 

We denote by $\cX_\ast(G)$ the set of one-parameter subgroups of $G$. If $G = (\Gm)^n$ is a torus, it is well known that $\cX_\ast(G)$ is isomorphic to $\ZZ^n$. In this case, we write $\cX^\ast(G)$ for the 
lattice of characters of $G$, which is canonically the dual of $\cX_\ast(G)$. We also write $\cX_\ast(G)_\RR = \cX_\ast(G) \otimes_\ZZ \RR$ and $\cX^\ast(G)_\RR = \cX^\ast(G) \otimes_\ZZ \RR$. 

We denote by $\Pic^G(X)$ the group of $G$-linearized line bundles on $X$ and let $\Pic^G(X)_{\RR}$ be the vector space obtained by tensoring with $\RR$.  In analogy to  \cite[\S 2.1]{Res00} we also 
denote by $\mathrm{NS}^G(X)_\RR$ the group of $G$-linearized line bundles on $X$ modulo algebraic equivalence, tensored with $\RR$. We do not claim that in our situation $\mathrm{NS}^G(X)_\RR$ is a 
finite-dimensional vector space, nor do we need this in the subsequent arguments.

\subsection{The numerical criterion for (semi)stability}\label{subsec:NumCritStab}

We start out by recalling a generalization of the classical Hilbert-Mumford criterion for (semi)stability \cite[Theorem 2.1]{GIT}, suitable for our relative setting. This result, proved in \cite{GHH-2015}, will 
form a key tool in proving our main results. 

For an arbitrary reductive group $G$, let $L$ be a $G$-linearized line bundle on $X$. For every $x \in X$ and any $1$-PS $\lambda: \Gm \to G$, we define $\mu^L(x, \lambda)$ as follows:
\begin{itemize}
    \item if $x_0 = \lim_{t \to 0}\lambda(t) \cdot x$ exists, then $\lambda(\Gm)$ acts on the fibre $L|_{x_0}$ via a character $t \to t^{-\mu^L(x, \lambda)}$;
    \item if $\lim_{t \to 0}\lambda(t) \cdot x$ does not exist, then we formally define $\mu^L(x, \lambda) = +\infty$.
\end{itemize}

\begin{theorem} {\cite[Corollary 1.1]{GHH-2015}}
\label{thm:relativeHilbMum}
Assume that $L$ is ample. Let $ x \in X $ be a closed point. 
\begin{itemize}
\item  The point $x$ is stable if and only if $\mu^L(x, \lambda) > 0$ for any non-trivial $1$-PS $ \lambda $.
\item  The point $x$ is semistable if and only if $\mu^L(x, \lambda) \geqslant 0$ for any $1$-PS $ \lambda $.
\end{itemize}
\end{theorem}

We would like to mention here that we we made the overall assumption in \cite{GHH-2015} that the group $G$ is linearly reductive.  
This, however, is not needed for the proof of the numerical criterion where it suffices to assume that $G$ is reductive.

In \cite[Definition 4.2]{CJR14}, the authors defined the so-called \emph{numerical (semi)stability} as an alternative to the classical (semi)stability for a polarization that is not necessarily ample. 
This notion turns out to be relevant also in our relative setting, and we next give a precise definition.

\begin{definition}
\label{def:numericalsemistable}
Let $L$ be a $G$-linearized line bundle on $X$. 
\begin{itemize}
\item We say that a point $x \in X$ is \emph{numerically stable}, resp.~\emph{numerically semistable}, if condition (i), resp.~condition (ii), of Theorem \ref{thm:relativeHilbMum} holds for $x$. 

\item We denote by $ X^{ns}(L) $, resp.~$ X^{nss}(L) $, the locus of numerically stable, resp.~numerically semistable, points.
\end{itemize}
\end{definition}

Similar to our Definition \ref{def:new}, numerical (semi)stability is also equivalent to the classical notion under an ample linearization by the Hilbert-Mumford criterion, but weaker in general. 
For an ample line bundle $L$, we will therefore only talk about (semi)stable points, and use the notation $ X^{s}(L) $, resp.~$ X^{ss}(L) $.

\begin{remark}\label{rmk:nss=wss}
	The notions of weak (semi)stability in Definition \ref{def:new} and numerical (semi)stability in Definition \ref{def:numericalsemistable} coincide with each other obviously when $L$ is ample. 
	We will also see that they coincide when $L$ is the pullback of an ample line bundle along a projective morphism satisfying the assumptions of Lemma \ref{lem:fun}, which will be important for our 
	applications; see Lemma \ref{lem:nonample-HM}. However, it is not completely clear to us to what extent the two notions coincide with each other in general.
\end{remark}

\subsection{The numerical functions}\label{subsec:relHilbMum}
Next, we give a more ``intrinsic" description of the numerical function $\mu^L(x,\lambda)$, which will turn out to be quite useful for our purposes. First we need to recall some notations from \cite{GHH-2015}. 
In the remainder of this subsection we will assume that $G$ is a torus. 

We write $S = \Spec A$ for some finitely generated $\kk$-algebra $A$. Assume $L$ is an ample line bundle on $X$. By replacing $L$ with a tensor power, which does not affect stability, we can 
assume $L$ is relatively very ample. Let $V = H^0(L)$; this is a finitely generated $A$-module. Then we obtain a closed embedding
$$ X \subseteq \PP(V) = \Proj \Sym V. $$
The relative projective space $\PP(V)$ can be realized as the projectivization of the relative affine space $\VV(V) = \Spec \Sym V$.

We recall that an action by the torus $G$ on $S=\Spec A$ corresponds to a weight decomposition
\begin{equation}
	\label{eqn:weightA}
	A = \bigoplus_{\chi \in \cX^*(G)} A_\chi.
\end{equation}
A compatible $G$-action on $V$ moreover corresponds to a weight decomposition
\begin{equation}
	\label{eqn:weightV}
	V = \bigoplus_{\chi \in \cX^*(G)} V_\chi,
\end{equation}
such that the action of $A$ on $V$ respects the multi-grading.

Let $p \in \PP(V)$ be a closed point and $p^\ast \in \VV(V)$ be any of its lifts. 
The image $f(p) \in S$ is given by a $\kk$-algebra homomorphism
$$ A \to \kk, $$
which is the evaluation map of functions on $S$ at the point $f(p)$ (where we have extended $f$ in the obvious way to all of $\PP(V)$). Moreover, $p^\ast \in \VV(V)$ itself is given by an $A$-module homomorphism
$$ V \to \kk, $$
which is the evaluation map of sections of $L$ at the point $p^\ast$.
For each character $\chi \in \cX^\ast(G)$, we write the restrictions of the above homomorphisms to the corresponding component by
$$ [f(p)]_\chi: A_\chi \to \kk $$
and
$$ [p^\ast]_\chi: V_\chi \to \kk. $$

In fact, whether a given section vanishes at $p$ can be checked  on any of  its lifts $p^\ast$. Therefore, without ambiguity, in the following discussions, we can simply write the 
condition $[p^\ast]_\chi=0$ (resp. $[p^\ast]_\chi \neq 0$) by $[p]_\chi=0$ (resp. $[p]_\chi \neq 0$).

For any $1$-PS $\lambda \in \cX_\ast(G)$, we give a more detailed analysis on when $\mu^L(x,\lambda)$ achieves a finite value, and give another interpretation of this value. 

\begin{lemma}
\label{lem:fin-val}
Assume $L$ is very ample. Let $x$ be a closed point in $X$. Under the above notations, the following statements are equivalent:
\begin{itemize}
    \item [(i)] $\mu^L(x, \lambda)$ is finite;
    \item [(ii)] $\lim_{t \to 0} \lambda(t) \cdot x$ exists in $X$;
    \item [(iii)] $\lim_{t \to 0} \lambda(t) \cdot f(x)$ exists in $S$;
    \item [(iv)] $[f(x)]_\chi = 0$ for each $\chi \in \cX^\ast(G)$ satisfying $\langle \lambda, \chi \rangle <0$;
    \item [(v)] There exists some $d_0 \in \ZZ$, such that $[x]_\chi = 0$ for each $\chi \in \cX^\ast(G)$ satisfying $\langle \lambda, \chi \rangle < d_0$.
\end{itemize}
\end{lemma}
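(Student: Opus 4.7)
The plan is to prove the five conditions equivalent via four direct equivalences: (i)$\Leftrightarrow$(ii), (ii)$\Leftrightarrow$(iii), (iii)$\Leftrightarrow$(iv), and (ii)$\Leftrightarrow$(v). The first is tautological from the convention that $\mu^L(x,\lambda) = +\infty$ precisely when $\lim_{t\to 0}\lambda(t)\cdot x$ fails to exist in $X$. The implication (ii)$\Rightarrow$(iii) is immediate from $G$-equivariance of $f$. For (iii)$\Rightarrow$(ii), I would invoke the projectivity, and hence properness, of $f\colon X \to S$: the orbit map $\Gm \to X$, $t\mapsto \lambda(t)\cdot x$, composed with $f$ gives $\Gm \to S$, which by (iii) extends to $\AA^1 \to S$. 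The valuative criterion applied at the origin of $\AA^1$ (via $\Spec \kk[[t]] \to \AA^1$) then lifts this uniquely to a morphism $\AA^1 \to X$, whose value at $0$ is the desired limit.

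For (iii)$\Leftrightarrow$(iv), the idea is to translate the orbit through $f(x)$ into the language of the weight decomposition $A = \bigoplus_\chi A_\chi$ of Lemma \ref{lem:chi-decomp}. The map $\Gm \to S$, $t\mapsto \lambda(t)\cdot f(x)$, dualizes to a $\kk$-algebra homomorphism $A \to \kk[t,t^{-1}]$ that sends each $a_\chi \in A_\chi$ to $[f(x)]_\chi(a_\chi)\cdot t^{\langle\lambda,\chi\rangle}$ (up to sign convention). This factors through $\kk[t]$, equivalently the limit in $S$ exists, if and only if no strictly negative powers of $t$ appear, which is exactly the vanishing $[f(x)]_\chi = 0$ for every $\chi$ with $\langle\lambda,\chi\rangle < 0$.

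For (ii)$\Leftrightarrow$(v), the strategy is parallel but requires an additional rescaling step, because $X$ sits inside $\PP(V)$ rather than $\VV(V)$. Embed $X \subseteq \PP(V)$ via the very ample $L$, choose a nonzero lift $p^\ast \in \VV(V)$ of $x$, and use the weight decomposition $V = \bigoplus_\chi V_\chi$ of Lemma \ref{lem:chi-decomp}. The orbit $\Gm \to \VV(V)$, $t\mapsto \lambda(t)\cdot p^\ast$, dualizes to a linear map $V \to \kk[t,t^{-1}]$ in which each $v_\chi \in V_\chi$ contributes a term proportional to $t^{\langle\lambda,\chi\rangle}$, with nonzero coefficient precisely when $[x]_\chi \neq 0$. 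A limit of $\lambda(t)\cdot x$ in $\PP(V)$ exists if and only if, after multiplying the entire orbit by a single monomial $t^{-d_0}$, the resulting map extends to $V \to \kk[t]$ and specializes to something nonzero at $t = 0$; equivalently, the set $\{\chi : [x]_\chi \neq 0\}$ is bounded below by some $d_0$ under $\langle\lambda,\cdot\rangle$, which is exactly (v). Since $X$ is closed and $G$-invariant in $\PP(V)$, any such limit automatically lies in $X$.

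The main obstacle is not conceptual but bookkeeping: one must track sign conventions carefully, distinguish between limits in $\VV(V)$ and in $\PP(V)$, and verify that the condition in (v) is independent of the chosen lift $p^\ast$ (which follows from the remark preceding the lemma). Once the multi-graded decompositions of Lemma \ref{lem:chi-decomp} are in place, the argument reduces mechanically to the $\Gm$-case treated in \cite[Lemma 5.3]{GHH-2015}, and no genuinely new ideas are required.
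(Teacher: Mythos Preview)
Your proposal is correct and follows essentially the same approach as the paper: both reduce the higher-rank torus case to the $\Gm$-case of \cite[Lemma 5.3]{GHH-2015} by regrouping the weight decomposition along the pairing $\langle \lambda, \cdot \rangle$. The only cosmetic difference is that the paper closes the chain via (iv)$\Leftrightarrow$(v) while you do (ii)$\Leftrightarrow$(v), and you spell out explicitly the valuative-criterion and orbit-dualization steps that the paper simply attributes to projectivity of $f$ and to \cite[Lemma 5.3]{GHH-2015}.
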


\begin{proof}
(i)$\Leftrightarrow$(ii) is by definition and (ii)$\Leftrightarrow$(iii) is due to the assumption that $f$ is projective. (iii)$\Leftrightarrow$(iv) and (iv)$\Leftrightarrow$(v) are both in \cite[Lemma 5.3]{GHH-2015}, except that 
every component $V_d$ in the single grading used in \emph{loc. cit.} should be understood as the direct sum of $V_\chi$ for all $\chi$ satisfying $\langle \lambda, \chi \rangle = d$.
\end{proof}

We will now rephrase this in a manner which will be useful later. By \cite[Lemma 1.1(a)]{Kem78}, we can find a $G$-equivariant closed embedding
$$ \iota: S \longrightarrow W $$
where $W$ is an affine space on which the $G$-action is linear. As a vector space, $W$ can be decomposed into weight spaces associated to distinct characters of $G$; namely
\begin{equation}\label{equ:introductionGamma}
 W = \bigoplus_{\chi \in \Gamma} W_\chi,
 \end{equation}
where $\Gamma$ is a non-empty finite subset of $\cX^\ast(G)$.  The set $\Gamma$ will play an important role in what follows.
Then each point $w \in W$ can be written as
$$ w = \sum_{\chi \in \Gamma} w_\chi $$
where $w_\chi \in W_\chi$ can be thought of as the coordinate of $w$ in the subspace $W_\chi$. 

For every subset $I \subseteq \Gamma$, we write
\begin{align*}
    W_I &= \{ w \in W \mid w_\chi \neq 0 \Leftrightarrow \chi \in I \}, \\
    S_I &= \iota^{-1}(W_I), \\
    X_I &= f^{-1}(S_I).
\end{align*}
Then
\begin{equation}
	\label{eqn:decompS}
	S = \bigcup_{I \subseteq \Gamma} S_I
\end{equation}
is a finite stratification indexed by the power set of $\Gamma$.

Without loss of generality, we can assume that the coordinate $w_\chi$ does not vanish identically on $S$; otherwise we can remove the summand $W_\chi$ from $W$ and embed $S$ equivariantly in a 
smaller affine space. Under this assumption, the set of characters $\chi$ with non-trivial $A_\chi$ in the decomposition \eqref{eqn:weightA} is precisely the submonoid of $\cX^\ast(G)$ generated by $\Gamma$.

Recall that in our case $V = H^0(L)$ is a finitely generated $A$-module, and let $v_1, v_2, \cdots, v_n$ be a set of generators. Without loss of generality, we can assume that each generator $v_i$ is of 
pure $G$-weight; otherwise we can replace $v_i$ by its components of pure weights, which still gives a finite set of generators for $V$. To be more precise, we write $v_i \in V_{\chi_i}$ for each $ 1 \leqslant i \leqslant n $.

Now we take an arbitrary point $x \in X$. We write
\begin{align*}
	\Gamma_{f(x)} &= \{ \chi \in \cX^\ast(G) \mid [f(x)]_\chi \neq 0 \}, \\
	\Gamma_{x} &= \{ \chi \in \cX^\ast(G) \mid [x]_\chi \neq 0 \}.
\end{align*}

We assume $x \in X_I$ for some $I \subseteq \Gamma$, or equivalently, $f(x) \in S_I$. Then we see that $\Gamma_{f(x)}$ is the submonoid of $\cX^\ast(G)$ generated by $I$. The following lemma gives a 
description of the structure of $\Gamma_x$:

\begin{lemma}
	\label{lem:strucGammax}
	Without loss of generality, we assume that 
	\begin{equation*}
		[x]_{\chi_i}(v_i) \begin{cases}
			\neq 0 & \text{if  } 1 \leqslant i \leqslant k; \\
			=0 & \text{if  } k+1 \leqslant i \leqslant n
		\end{cases}
	\end{equation*}
	for some integer $k$ with $1 \leqslant k \leqslant n$. Then we have
	$$ \Gamma_x = \bigcup_{1 \leqslant i \leqslant k} \left ( \chi_i + \Gamma_{f(x)} \right ). $$
\end{lemma}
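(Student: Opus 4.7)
The plan is to compute $\Gamma_x$ by translating non-vanishing of sections of weight $\chi$ into non-vanishing of functions on $S$, via the $A$-module structure of $V$. The key observation is $A$-linearity of evaluation at $x$: for any $\eta, \chi' \in \cX^\ast(G)$, any $a \in A_\eta$ and any $v \in V_{\chi'}$, we have $av \in V_{\chi' + \eta}$ and
$$ [x]_{\chi' + \eta}(av) = [f(x)]_\eta(a) \cdot [x]_{\chi'}(v). $$
This is immediate from the fact that a lift $x^\ast \in \VV(V)$ of $x$ corresponds to an $A$-algebra homomorphism $\Sym V \to \kk$, whose restriction to $A \subseteq \Sym V$ recovers $f(x)$. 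Moreover, as recalled in the excerpt, whether a pure-weight section vanishes at $x$ is independent of the chosen lift, so the reordering singling out $1 \leqslant i \leqslant k$ is well-defined from $x$ alone.

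Next I would decompose $V_\chi$ in terms of the generators \eqref{eqn:VintoComp}. Since $V = \sum_{i=1}^n A v_i$ respects the weight grading with $v_i \in V_{\chi_i}$, every element of $V_\chi$ has the form $\sum_{i=1}^n a_i v_i$ with $a_i \in A_{\chi - \chi_i}$ (where $A_\eta = 0$ unless $\eta$ lies in the submonoid of $\cX^\ast(G)$ generated by $\Gamma$). Applying the $A$-linearity above gives
$$ [x]_\chi\!\left(\sum_{i} a_i v_i\right) = \sum_{i=1}^n [f(x)]_{\chi - \chi_i}(a_i) \cdot [x]_{\chi_i}(v_i), $$
and by the hypothesis on the ordering, only the terms with $1 \leqslant i \leqslant k$ can contribute. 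Hence $\chi \in \Gamma_x$ if and only if there exist $1 \leqslant i \leqslant k$ and $a_i \in A_{\chi - \chi_i}$ with $[f(x)]_{\chi - \chi_i}(a_i) \neq 0$, i.e.\ $\chi - \chi_i \in \Gamma_{f(x)}$. This yields $\Gamma_x = \bigcup_{1 \leqslant i \leqslant k}(\chi_i + \Gamma_{f(x)})$, as claimed.

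The calculation is largely bookkeeping once the $A$-linearity of evaluation is in hand, so the only potential obstacle is notational: one must keep careful track of the compatible weight decompositions on $A$ and $V$ and ensure that the $A$-action $A_\eta \times V_{\chi'} \to V_{\chi' + \eta}$ respects the gradings. The inclusion $\Gamma_x \supseteq \bigcup_i (\chi_i + \Gamma_{f(x)})$ is the easy direction (exhibit $a v_i$ with $a \in A_{\chi - \chi_i}$ and $[f(x)]_{\chi - \chi_i}(a) \neq 0$, so that $[x]_\chi(a v_i) \neq 0$); the reverse inclusion uses only that non-vanishing of a sum of scalars forces at least one summand to be non-zero.
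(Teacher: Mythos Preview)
Your proof is correct and follows essentially the same approach as the paper: both arguments split into the two inclusions and use that $(av)(x) = a(f(x)) \cdot v(x)$ to pass between non-vanishing of sections in $V_\chi$ and non-vanishing of functions in $A_{\chi-\chi_i}$. The only difference is presentational: you isolate the $A$-linearity formula $[x]_{\chi'+\eta}(av) = [f(x)]_\eta(a)\cdot [x]_{\chi'}(v)$ explicitly at the outset, whereas the paper invokes it implicitly in phrases like ``the section $a' \cdot v_i$ does not vanish at $x$''.
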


\begin{proof}
	We first show ``$\supseteq$''. For each $1 \leqslant i \leqslant k$, the condition $[x]_{\chi_i}(v_i) \neq 0$ implies that $v_i(x) \neq 0$. For every $\chi' \in \Gamma_{f(x)}$, there exists some $a' \in A_{\chi'}$, 
	such that $a'(f(x)) \neq 0$. Then the section $a' \cdot v_i \in V$ does not vanish at $x$, which is of pure weight $\chi_i + \chi'$. It follows immediately that $\chi_i + \chi' \in \Gamma_x$.

	We next show ``$\subseteq$''. For any $\chi' \in \Gamma_x$, there exists some $v' \in V_{\chi'}$ such that $v'(x) \neq 0$. We can write
	\begin{equation}
		\label{eqn:decomVprime}
		v' = a_1 v_1 + \cdots + a_n v_n
	\end{equation}
	for some $a_1, \cdots, a_n \in A$. Without loss of generality, we can assume that $a_iv_i \in V_{\chi'}$ for every $1 \leqslant i \leqslant n$; otherwise, we can remove all components in the right-hand side 
	of \eqref{eqn:decomVprime} that are not of weight $\chi'$ and the equality still holds. Then the condition $v'(x) \neq 0$ implies that $a_{i_0}v_{i_0}$ does not vanish at $x$ for some $1 \leqslant i_0 \leqslant n$, 
	which in particular implies that $v_{i_0}(x) \neq 0$. By assumption, $v_i(x) = 0$ for each $k+1 \leqslant i \leqslant n$. Therefore we have $1 \leqslant i_0 \leqslant k$. Assume $a_{i_0} \in A$ is of weight $\chi_0$, 
	then we have $\chi' = \chi_{i_0} + \chi_0 \in \chi_{i_0} + \Gamma_{f(x)}$, as desired.
\end{proof}

For $x\in X$ we define $\mathcal{C}_x$ to be the convex cone in $\cX_\ast(G)_\RR$ generated by the set
\begin{equation}
	\label{def:Cx}
	\{ \lambda \in \cX_\ast(G) \mid \langle \lambda, \chi \rangle \geqslant 0 \text{ for all } \chi \in \Gamma_{f(x)} \}.
\end{equation}
Since $\Gamma_{f(x)}$ is a finitely generated monoid, 
$\mathcal{C}_x$ is a closed rational polyhedral cone. Moreover, by the equivalence of (i) and (iv) in Lemma \ref{lem:fin-val}, $\mu^L(x,\lambda)$ is finite for any $\lambda$ in the set \eqref{def:Cx}. Indeed, we have 
the following alternative description of $\mu^L(x, \lambda)$:

\begin{lemma}
\label{lem:geom-int}
Under the assumption of Lemma \ref{lem:fin-val}, for any $1$-PS $\lambda \in \cX_\ast(G)$, if $\mu^L(x, \lambda)$ is finite, then it is given by
\begin{equation}
	\label{eqn:afmu}
	\mu^L(x, \lambda) = - \min \{ \langle \lambda, \chi \rangle \mid \chi \in \Gamma_x \}.
\end{equation}
\end{lemma}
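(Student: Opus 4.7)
The plan is to compute $\mu^L(x,\lambda)$ directly by lifting $x$ to the affine cone $\VV(V)$ and tracking its weight decomposition under $\lambda$. First I would pick a lift $x^\ast \in \VV(V) = \Spec \Sym V$ of $x$, which is equivalently a functional $V \to \kk$ and, by Lemma \ref{lem:chi-decomp}, decomposes as $x^\ast = \sum_\chi [x]_\chi$ with $[x]_\chi \colon V_\chi \to \kk$. By definition the support of this decomposition is exactly $\Gamma_x$.

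Next I would read off the action of $\lambda$ on $x^\ast$. Since $\kk$-points of $\VV(V) = \Spec \Sym V$ are functionals on $V$, the induced torus action rescales each component $[x]_\chi$ by $t^{-\langle \lambda, \chi\rangle}$. The hypothesis that $\mu^L(x,\lambda)$ is finite, combined with Lemma \ref{lem:fin-val}(v), ensures that $\langle \lambda, \chi\rangle$ is bounded below on $\Gamma_x$, so that
$$ d_0 := \min\{\langle \lambda, \chi\rangle \mid \chi \in \Gamma_x\} $$
is attained. Multiplying $\lambda(t)\cdot x^\ast$ by $t^{d_0}$ then gives a well-defined non-zero limit in $\VV(V)$ as $t \to 0$, and passing to $\PP(V)$ this produces $x_0 = \lim_{t\to 0}\lambda(t)\cdot x$, with a lift $x_0^\ast$ supported precisely on those components of $x^\ast$ attaining the minimum weight $d_0$.

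Finally, to convert this into a statement about the fibre of $L = \OO_{\PP(V)}(1)$, I would use that its fibre at $x_0$ is canonically dual to the line $\kk \cdot x_0^\ast$ in $V^\ast$; the computation above shows this line carries $\lambda$-weight $-d_0$, so its dual carries weight $+d_0$. By the defining convention for $\mu^L$, namely that $\lambda(t)$ acts on $L|_{x_0}$ as $t^{-\mu^L(x,\lambda)}$, this yields $\mu^L(x,\lambda) = -d_0$, which is precisely \eqref{eqn:afmu}. The argument is essentially a multi-graded reprise of \cite[Lemma 5.3]{GHH-2015}; I expect the only real source of friction to be keeping the sign conventions straight across the dualization $V \leftrightarrow V^\ast$ and the convention for $\mu^L$, but no genuine obstacle arises.
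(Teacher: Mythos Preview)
Your approach is exactly the one the paper takes: the paper simply observes that collapsing the multigrading along $\lambda$ via $V_d=\bigoplus_{\langle\lambda,\chi\rangle=d}V_\chi$ reduces the statement to \cite[Lemma~5.3]{GHH-2015}, and what you have written is a direct reprise of that lemma's proof in the multigraded language. So the strategy is correct and matches the paper.

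There is, however, a genuine sign slip in your execution, precisely of the type you anticipated. Under the conventions in force here (check this against Lemma~\ref{lem:fin-val}(iv): the limit in $S$ exists iff $[f(x)]_\chi=0$ whenever $\langle\lambda,\chi\rangle<0$), the component $[x]_\chi$ rescales as
\[
[\lambda(t)\cdot x^\ast]_\chi \;=\; t^{+\langle\lambda,\chi\rangle}\,[x^\ast]_\chi,
\]
not $t^{-\langle\lambda,\chi\rangle}$. The point is that the $G$-action on $\Spec\Sym V$ is induced from the coaction on $\Sym V$, and is \emph{not} the contragredient action on $V^\vee$; the two differ by an inverse. With your sign, multiplying $\lambda(t)\cdot x^\ast$ by $t^{d_0}$ produces exponents $d_0-\langle\lambda,\chi\rangle\leqslant 0$ (since $d_0$ is the \emph{minimum}), so the limit you claim does not exist. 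With the correct sign one multiplies by $t^{-d_0}$ instead; then the exponents $\langle\lambda,\chi\rangle-d_0$ are $\geqslant 0$, the limit $x_0^\ast$ is supported on the characters achieving the minimum, $L_{x_0}$ (as the quotient of $V$ through which only weight-$d_0$ pieces survive) carries $\lambda$-weight $d_0$, and $\mu^L(x,\lambda)=-d_0$ as desired. After this correction the argument is complete.
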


\begin{proof}
The claim follows immediately from \cite[Lemma 5.3]{GHH-2015}, once we notice that we can understand $V_d$ in \emph{loc. cit.} as the direct sum of $V_\chi$ for all $\chi$ satisfying $\langle \lambda, \chi \rangle = d$.
\end{proof}

One of the main benefits of Lemma \ref{lem:geom-int} is that we can use \eqref{eqn:afmu} to extend the definition of $\mu^L(x, \lambda)$ to all $\lambda \in \mathcal{C}_x$ that are not necessarily integral. 
Moreover, we see that $\mu^L(x, \lambda)$ is continuous as a function in $\lambda$. In particular, the quotient $\frac{\mu^L(x, \lambda)}{\lVert \lambda \rVert}$ is a continuous function defined on the closed 
subset of the unit sphere
$$ \mathcal{C}_x \cap \{ \lambda \in \cX_\ast(G)_\RR \mid \lVert \lambda \rVert = 1 \} $$
which is compact, hence achieves a finite maximum and a minimum if $\mathcal{C}_x \supsetneq \{0\}$.

\subsection{The numerical (semi)stability}\label{subsec:contM}
We fix a norm $\lVert \cdot \rVert$ on $\cX_\ast(G)$ which is invariant under the action of the Weyl group; see \cite[\S 1.1.3]{DH98}. Then we define
\begin{equation}
	\label{eqn:M-func}
	M^L(x) = \inf_{\lambda \in \cX_\ast(G) \backslash \{0\}} \frac{\mu^L(x,\lambda)}{\lVert \lambda \rVert}.
\end{equation}

The expression $M^L(x)$ plays an important role in VGIT for projective varieties over a field. We will next establish some of its fundamental properties also in our relative setting. Using the 
preliminaries in \S \ref{subsec:relHilbMum}, it is in fact straightforward to adapt the arguments in \cite{DH98}and \cite{Res00}. 

First we give a generalization of \cite[Proposition 1.1.6]{DH98}.

\begin{proposition}
\label{prop:M-finite}
Assume that $G$ is a torus. For any $G$-linearized line bundle $L$ on $X$, we have
\begin{itemize}
    \item if $\lim_{t \to 0} \lambda(t) \cdot x$ does not exist for any non-trivial $1$-PS $\lambda \in \cX_\ast(G)$, then $M^L(x) = + \infty$;
    \item otherwise, $M^L(x)$ has a finite value.
\end{itemize}
\end{proposition}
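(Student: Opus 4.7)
The plan is to split based on whether the rational polyhedral cone $\mathcal{C}_x \subseteq \cX_\ast(G)_\RR$ defined in \eqref{def:Cx} equals $\{0\}$ or is strictly larger. By the equivalence (i)$\Leftrightarrow$(iv) of Lemma \ref{lem:fin-val}, a non-zero integer $1$-PS $\lambda$ satisfies $\mu^L(x,\lambda) < +\infty$ if and only if $\lambda \in \mathcal{C}_x$; the remaining non-zero $\lambda \in \cX_\ast(G)$ contribute $+\infty$ to the infimum defining $M^L(x)$ in \eqref{eqn:M-func} and can be ignored.

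If no non-trivial $\lambda \in \cX_\ast(G)$ admits a limit, then $\mathcal{C}_x \cap \cX_\ast(G) = \{0\}$, every term in \eqref{eqn:M-func} is $+\infty$, and $M^L(x) = +\infty$ by convention. Otherwise $\mathcal{C}_x \supsetneq \{0\}$, and since it is rational it contains non-zero integer $1$-PS's, so the infimum runs over a non-empty set of finite-valued contributions. Applying Lemma \ref{lem:strucGammax}, every $\chi \in \Gamma_x$ has the form $\chi_i + \chi'$ with $1 \leqslant i \leqslant k$ and $\chi' \in \Gamma_{f(x)}$; since $\langle \lambda, \chi'\rangle \geqslant 0$ for $\lambda \in \mathcal{C}_x$ and $\chi' \in \Gamma_{f(x)}$, the minimum in \eqref{eqn:afmu} is attained on the finite set $\{\chi_1, \ldots, \chi_k\}$. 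Consequently,
\begin{equation*}
\mu^L(x, \lambda) = -\min_{1 \leqslant i \leqslant k} \langle \lambda, \chi_i \rangle
\end{equation*}
extends to a piecewise-linear, continuous, positively homogeneous function on all of $\mathcal{C}_x$, so $\mu^L(x, \lambda)/\lVert \lambda \rVert$ is a continuous, scale-invariant function on $\mathcal{C}_x \setminus \{0\}$.

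Restricting to the compact slice $\mathcal{C}_x \cap \{\lambda : \lVert \lambda \rVert = 1\}$, this continuous function attains a finite minimum $m \in \RR$. Density of rational rays in the rational polyhedral cone $\mathcal{C}_x$ together with scale-invariance shows that the infimum over integer $1$-PS's agrees with $m$, so $M^L(x) = m$ is finite. The delicate point is the reduction from the a priori infinite indexing set $\Gamma_x$ to the finite set $\{\chi_1, \ldots, \chi_k\}$ of weights of the generators of $V$ that do not vanish at $x$; once this is in place, the compactness-and-continuity argument on the unit sphere slice is routine.
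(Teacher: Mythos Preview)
Your argument is correct and clean for the case where $L$ is (very) ample: the reduction via Lemma~\ref{lem:strucGammax} to the finite set $\{\chi_1,\dots,\chi_k\}$ makes explicit why the minimum in \eqref{eqn:afmu} is attained, and the compactness argument on the sphere slice then goes through. In fact this step is arguably more transparent than the paper's informal appeal to the ``signed distance to the boundary of the projection of $\conv(\Gamma_x)$''.

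However, there is a genuine gap: the statement is for an \emph{arbitrary} $G$-linearized line bundle $L$, whereas the entire apparatus you invoke --- the module $V=H^0(L)$, the weight sets $\Gamma_x$, the generators $v_i$ of weight $\chi_i$, Lemma~\ref{lem:strucGammax}, and the formula \eqref{eqn:afmu} of Lemma~\ref{lem:geom-int} --- is set up in \S\ref{subsec:contM} only under the standing assumption that $L$ is very ample. For a non-ample $L$ none of these objects are available, and your displayed formula $\mu^L(x,\lambda)=-\min_i\langle\lambda,\chi_i\rangle$ has no meaning. The paper closes this gap by a separate reduction: write an arbitrary $L$ as $L_1\otimes L_2^{-1}$ with $L_1,L_2$ $G$-linearized ample, use the linearity $\mu^L=\mu^{L_1}-\mu^{L_2}$, and observe that the \emph{difference} of two functions each continuous on the compact slice $\mathcal{C}_x\cap\{\lVert\lambda\rVert=1\}$ still attains a finite minimum there. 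You should add this step (or an equivalent one) to cover the general case.
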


\begin{proof}
In the first case we have $\mu^L(x, \lambda) = + \infty$ for all $\lambda \in \cX_\ast(G)$, hence $M^L(x) = + \infty$.

In the second case we have $\mathcal{C}_x \supsetneq \{0\}$. If $L$ is ample, we have seen that $\frac{\mu^L(x, \lambda)}{\lVert \lambda \rVert}$ is a continuous function, which,
using the fact that the rational points lie dense, in particular leads to
\begin{align}
	M^L(x) &= \inf_{\lambda \in \mathcal{C}_x \backslash \{0\}} \frac{\mu^L(x,\lambda)}{\lVert \lambda \rVert} \label{eqn:newMfunc} \\
	&= \inf_{\substack{\lambda \in \mathcal{C}_x \\ \lVert \lambda \rVert = 1}} \frac{\mu^L(x,\lambda)}{\lVert \lambda \rVert} \notag
\end{align}
which achieves a finite minimum on the compact set $ \mathcal{C}_x \cap \{ \lambda \in \cX_\ast(G)_\RR \mid \lVert \lambda \rVert = 1 \} $. If $L$ is not necessarily ample, we can always find $G$-linearized 
ample line bundles $L_1$ and $L_2$, such that $L = L_1 \otimes L_2^{-1}$. Using the fact that $\mu^L(x, \lambda)$ is linear with respect to $L$, the statement follows by a similar argument as in the proof 
of \cite[Proposition 1.1.6]{DH98}.
\end{proof}

In fact, both Proposition \ref{prop:M-finite} and Proposition \ref{prop:M-cont} below hold for an arbitrary reductive group $G$; we postpone the proof to Subsection \ref{subsec:reductive}.

\begin{remark}
We emphasize that the condition for $M^L(x)$ to be finite (resp. infinite) is independent of the choice of $L$. Indeed, it depends only on whether any $1$-PS of $G$ gives a limit point when it acts on the point $x \in X$. 
\end{remark}

The function $M^{\bullet}(x): \Pic^G(X) \to \RR \cup \{+\infty \} $ 
factors through $\mathrm{NS}^G(X)$, by the argument of \cite[Proposition 2]{Res00}. The above remark then says that for any point $x \in X$, either $M^L(x)= + \infty$ for 
all $L$ or $M^{\bullet}(x): \mathrm{NS}^G(X)_{\RR} \to \RR$ is a well defined function (where we have used linearity to extend over $\RR$).

The next result is a generalization of \cite[Lemma 2]{Res00}.

\begin{proposition}
\label{prop:M-cont}
Assume that $G$ is a torus. For any point $x \in X$, we have
\begin{itemize}
    \item either $M^L(x)=+\infty$ for every $G$-linearized line bundle $L$; or
    \item the function
$$ M^{\bullet}(x): \mathrm{NS}^G(X)_\RR \longrightarrow \RR; \quad L \longmapsto M^L(x) $$
is well-defined, and its restriction to  any finite dimensional subspace of $\mathrm{NS}^G(X)_\RR$ is continuous. 
\end{itemize}
\end{proposition}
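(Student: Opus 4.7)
The plan is to dispose of the dichotomy first and then prove continuity by exhibiting $M^L(x)$ as the minimum of a two-variable function over a compact set. The first bullet is already handled by Proposition \ref{prop:M-finite} together with the remark after it: the finiteness of $M^L(x)$ depends only on whether the cone $\mathcal{C}_x$ is trivial, which is a property of the $G$-action on $x$ and is independent of $L$. Well-definedness on $\mathrm{NS}^G(X)_\RR$ is the content of the paragraph preceding the statement, adapted from \cite[Proposition 2]{Res00}. So from here on I assume $\mathcal{C}_x \supsetneq \{0\}$, fix an arbitrary finite-dimensional subspace $U \subseteq \mathrm{NS}^G(X)_\RR$, and aim to show that $M^{\bullet}(x)$ is continuous on $U$.

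The first key step will be to record the linearity of $L \mapsto \mu^L(x,\lambda)$. For any $\lambda \in \cX_\ast(G)$ admitting a limit $x_0$, this quantity is by construction the weight of $\lambda$ on the fibre $L|_{x_0}$, and since characters of $\Gm$ add under tensor products, the assignment is $\ZZ$-linear on $\Pic^G(X)$; passing through $\mathrm{NS}^G(X)$ and extending $\RR$-linearly equips $U$ with a family of linear forms $L \mapsto \mu^L(x,\lambda)$ indexed by $\lambda$.

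Next I would combine this with the geometric formula \eqref{eqn:afmu} from Lemma \ref{lem:geom-int}, which extends to arbitrary $\lambda \in \mathcal{C}_x$ and, thanks to Lemma \ref{lem:strucGammax}, reduces to a minimum over the finite set of weights $\chi_1, \ldots, \chi_k$ of generators non-vanishing at $x$. Therefore $\lambda \mapsto \mu^L(x,\lambda)$ is piecewise linear on $\mathcal{C}_x$, in particular continuous. Setting $K = \mathcal{C}_x \cap \{\lambda \in \cX_\ast(G)_\RR \mid \lVert \lambda \rVert = 1\}$, which is compact and depends only on $x$, the rescaled function
$$ \Phi \colon U \times K \longrightarrow \RR, \qquad (L,\lambda) \longmapsto \frac{\mu^L(x,\lambda)}{\lVert \lambda \rVert} $$
is linear in $L$ and continuous in $\lambda$, hence jointly continuous.

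Finally, formula \eqref{eqn:newMfunc} gives $M^L(x) = \min_{\lambda \in K} \Phi(L,\lambda)$, and a standard compactness argument---the minimum of a jointly continuous function over a compact factor is continuous in the other variable---yields continuity of $L \mapsto M^L(x)$ on $U$. The point I expect to require most care is the interplay between the $L$-dependence of the set $\Gamma_x$ on which the minimum in \eqref{eqn:afmu} is computed and the desired linearity in $L$: when $L$ varies, the combinatorial structure of the piecewise-linear description of $\mu^L(x,\cdot)$ may change, yet the resulting values must vary linearly in $L$. This ultimately boils down to the additivity of characters under tensor products together with the fact that the cone $\mathcal{C}_x$ itself depends only on $\Gamma_{f(x)}$ and not on $L$, so the ambient compact set $K$ over which one takes the minimum remains fixed throughout the deformation of $L$.
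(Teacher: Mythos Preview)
Your proposal is correct and follows essentially the same approach as the paper: both arguments rest on the observation that for each fixed $\lambda \in \mathcal{C}_x \setminus \{0\}$ the map $L \mapsto \frac{\mu^L(x,\lambda)}{\lVert \lambda \rVert}$ is linear, so that $M^L(x)$ is an infimum of linear functionals in $L$. The paper's proof is terser---it simply notes that an infimum of linear functions is continuous (implicitly using Proposition~\ref{prop:M-finite} for finiteness, hence concavity plus finiteness on a finite-dimensional space gives continuity)---whereas you make the mechanism explicit via joint continuity of $\Phi$ on $U \times K$ and compactness of $K$; your concern about the $L$-dependence of $\Gamma_x$ is ultimately a non-issue, since linearity in $L$ is established for integral $\lambda$ directly from additivity of weights and then extends by continuity in $\lambda$, independently of the combinatorial description.
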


\begin{proof}
 When $M^L(x) < +\infty$
we have $\mathcal{C}_x \supsetneq \{0\}$. Notice that for every fixed $1$-PS $\lambda \in \mathcal{C}_x \backslash \{0\}$, the function $\frac{\mu^L(x,\lambda)}{\lVert \lambda \rVert}$ is a linear function in $L$. 
As the infimum of a family of linear functions, $M^L(x)$ is continuous with respect to $L$ as well.
\end{proof}

The function $M^{\bullet}(x)$ allows us to give a useful alternative description of the numerically (semi)stable locus.

\begin{lemma}
\label{lem:ample-HM}
Assume $L$ is a $G$-linearized line bundle. Then we have
\begin{align*}
    X^{nss}(L) &= \{ x \in X \mid M^L(x) \geqslant 0 \}; \\
    X^{ns}(L) &= \{ x \in X \mid M^L(x) > 0 \}.
\end{align*}
\end{lemma}
\begin{proof}
The first assertion is immediate from the definitions. Moreover, if $ M^L(x) > 0 $, then it necessarily holds that $\mu^L(x, \lambda) > 0 $ for all non-trivial $\lambda$. Thus $ x \in X^{ns}(L) $.

Assume conversely that $ x \in X^{ns}(L) $, then we want to prove that $ M^L(x) > 0 $. 
Replacing $L$ by a sufficiently large power, we write $ L = L_1 \otimes L_2^{-1} $, with each $L_i$ very ample and $G$-linearized. Suppose for contradiction that $ M^L(x) = 0 $. 
Then by Proposition \ref{prop:M-finite} and the discussion above it, there exists some $ \lambda \in \mathcal{C}_x $ such that
$$ \mu^{L_1}(x, \lambda) = \mu^{L_2}(x, \lambda). $$
We will show that we can replace $\lambda$ by a rational class $\lambda'$ such that equality still holds; this immediately contradicts $ x \in X^{ns}(L) $. Without loss of generality, we can 
assume that $G$ is a torus; otherwise we can replace $G$ by a maximal torus containing $\lambda$. 

We will need some notation and facts from \S \ref{subsec:relHilbMum}. Recall that $ L_i $ gives rise to a subset 
$\Gamma_{L_i,x}$ of $\mathcal{X}^*(G)$; we write $\Gamma_i$ for simplicity. The monoid $ \Gamma_{f(x)} $, on the other hand, is independent of the choice of line bundle. By Lemma \ref{lem:strucGammax}, there 
exist characters $ \chi_1^i, \ldots, \chi_{k_i}^i $ such that
$$ \Gamma_i = \bigcup_{1 \leq l \leq k_i} \left( \chi_l^i + \Gamma_{f(x)}  \right). $$ 

In this situation, Lemma \ref{lem:geom-int} gives that
$$ \mu^{L_i}(x, \lambda) = - \mathrm{min} \{ \langle \lambda, \chi \rangle \mid \chi \in \Gamma_i \}. $$
So $ \langle \lambda, \chi \rangle ~\geq~ - \mu^{L_i}(x, \lambda) $, where equality holds for a non-empty subset in $\Gamma_i$.

Since $ \langle \lambda, \chi' \rangle \geq 0 $ for every $ \chi' \in \Gamma_{f(x)} $, there exists some $r_i \leq k_i$ such that
$$ \langle \lambda, \chi_l^i \rangle = - \mu^{L_i}(x, \lambda) $$
for every $l = 1, \ldots, r_i $ (possibly after reordering). We can and will assume that this $r_i$ is maximal. 

Next we consider the linear subspace $\mathcal{L}$ defined by the equations 
$$ \langle \cdot , \chi_{l}^1 \rangle = \langle \cdot , \chi_{m}^2 \rangle, $$
for all choices $ l \in \{1, \ldots, r_1\} $ and $ m \in \{1, \ldots, r_2\} $. Then we can find a rational class $ \lambda' \in \mathcal{L} \cap \mathcal{C}_x $ arbitrarily close to $\lambda$. We formally 
write $ \lambda' = \lambda + \epsilon $. We claim it is still true that $ \langle \lambda', \chi_l^i \rangle = - \mu^{L_i}(x, \lambda') $ for \emph{some} $ l \in \{1, \ldots, r_i\} $. 

Indeed, if not, we could find $ \chi \in \Gamma_i $ for which the strict inequality
$$ \langle \lambda', \chi_l^i \rangle > \langle \lambda', \chi \rangle $$
holds for every $1 \leq l \leq r_i$. 
By Lemma \ref{lem:strucGammax}, we can assume $\chi \in \chi^i_{l_0} + \Gamma_{f(x)}$ for some suitable $1 \leqslant l_0 \leqslant k_i$. Then we 
have $$\langle \lambda', \chi \rangle \geqslant \langle \lambda', \chi^i_{l_0} \rangle. $$ Combining both inequalities we obtain
$$ \langle \lambda', \chi_l^i \rangle > \langle \lambda', \chi^i_{l_0} \rangle, $$
which in particular implies that $l_0 \notin \{ 1, 2, \cdots, r_i \}$. We can also rewrite the above inequality as
$$ \langle \lambda, \chi_l^i \rangle + \langle \epsilon, \chi_l^i - \chi^i_{l_0} \rangle > \langle \lambda, \chi^i_{l_0} \rangle $$
for every $1 \leqslant l \leqslant r_i$.
On the other hand, we also have
$$ - \mu^{L_i}(x, \lambda) = \langle \lambda, \chi_l^i \rangle  \leq \langle \lambda, \chi^i_{l_0} \rangle $$
for every $1 \leq l \leq r_i$.
Since there are only finitely many choices for $l_0$, there must be some choice of $l_0$ which makes the last two inequalities hold simultaneously for a sequence of $\epsilon$ arbitrarily close to $0$. 
Thus we must have $$ \langle \lambda, \chi_l^i \rangle = \langle \lambda, \chi^i_{l_0} \rangle. $$
This contradicts the maximality of the choice of $r_i$, so our claim is proved.

In conclusion, since $ \lambda' \in \mathcal{L} $, we get for suitable $l$ and $m$ that
$$ \mu^{L_1}(x, \lambda') = - \langle \lambda', \chi_l^1 \rangle = - \langle \lambda', \chi_m^2 \rangle  = \mu^{L_2}(x, \lambda'), $$
which is what we wanted to prove. 
\end{proof}

We also record the following related result, which will be helpful for our application in Section \ref{sec:application}. 

\begin{lemma}
\label{lem:nonample-HM}
Let $X$ and $Y$ be projective $S$-schemes. We assume $\pi: X \to Y$ is a $G$-equivariant projective morphism of $S$-schemes satisfying the  assumptions in Lemma \ref{lem:fun}. Let $\widetilde{L}$ be 
a $G$-linearized ample line bundle on $Y$ and $L=\pi^*\widetilde{L}$. Then we have
\begin{align*}
    X^{wss}(L) &= X^{nss}(L); \\
    X^{ws}(L) &= X^{ns}(L).
\end{align*}
\end{lemma}

\begin{proof}
Let $x \in X$ and $y=f(x) \in Y$ be closed points. 
On the one hand, by Lemma \ref{lem:fun}
we know that $x \in X^{wss}(L)$ if and only if $y \in Y^{ss}(\widetilde{L})$, which is further equivalent to $M^{\widetilde{L}}(y) \geqslant 0$ by Lemma \ref{lem:ample-HM}.
On the other hand, we claim $\mu^L(x, \lambda) = \mu^{\widetilde{L}}(y, \lambda)$ for every non-trivial $1$-PS $\lambda$. Indeed, since $X$ and $Y$ are both projective over $S$, it is clear 
that $\lim_{t \to 0} \lambda(t)\cdot x$ exists if and only if $\lim_{t \to 0} \lambda(t)\cdot y$ exists, hence $\mu^L(x, \lambda)=\infty$ if and only if $\mu^{\widetilde{L}}(y, \lambda)=\infty$. In the case when they are 
both finite, the equation follows from \cite[p.49 (iii)]{GIT}. This claim implies that $M^L(x) = M^{\widetilde{L}}(y)$ by \eqref{eqn:M-func}. In other words, $M^{\widetilde{L}}(y) \geqslant 0$ is equivalent 
to $M^L(x) \geqslant 0$. The above equivalences together conclude the first assertion. The proof for the second assertion is the same. 
\end{proof}

\subsection{Finiteness of possible semistable loci for tori}\label{subsec:finite-ss}

Throughout this subsection, we assume that $G$ is a torus. The goal is to establish our second key result in this case, namely

\begin{proposition}
\label{prop:fin-ss2}
Assume that $G$ is a torus. For any given $G$-actions on $X$ and $S$, there are only finitely many subsets of $X$ which can be realized as $X^{ss}(L)$ for some $G$-linearized ample line bundle $L$.
\end{proposition}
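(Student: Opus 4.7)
The plan is to combine the Hilbert--Mumford criterion, the geometric reformulation of $\mu^L(x, \lambda)$ in Lemma \ref{lem:geom-int}, and the finite stratification inherited from \eqref{eqn:decompS}, then run a wall-and-chamber argument on $\mathrm{NS}^G(X)_\RR$.

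By the Hilbert--Mumford criterion together with Lemma \ref{lem:geom-int}, a point $x \in X$ lies in $X^{ss}(L)$ if and only if $\mu^L(x, \lambda) = -\min\{\langle \lambda, \chi\rangle : \chi \in \Gamma_x\} \geqslant 0$ for every nonzero $\lambda \in \mathcal{C}_x$. This is a purely combinatorial condition on the pair $(\Gamma_x, \mathcal{C}_x) \subset \cX^*(G)_\RR \times \cX_\ast(G)_\RR$: informally, $0$ should lie in $\conv(\Gamma_x)$ with respect to every direction in $\mathcal{C}_x$. Hence semistability of $x$ is entirely determined by this datum.

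The first concrete step is to pull back the finite decomposition $S = \bigcup_{I \subseteq \Gamma} S_I$ of \eqref{eqn:decompS} to obtain a finite stratification $X = \bigcup_I X_I$ with $X_I = f^{-1}(S_I)$. On each $X_I$ both $\Gamma_{f(x)}$ (the submonoid generated by $I$) and the cone $\mathcal{C}_x$ are constant. Next, for a fixed $G$-linearized ample $L$, choose pure-weight generators $v_1, \ldots, v_n$ of $H^0(L)$ with weights $\chi_1, \ldots, \chi_n$ as in Lemma \ref{lem:strucGammax}, and refine each $X_I$ into the locally closed pieces $X_{I, K} = \{x \in X_I : \{i : v_i(x) \neq 0\} = K\}$ indexed by $K \subseteq \{1, \ldots, n\}$. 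By Lemma \ref{lem:strucGammax}, $\Gamma_x = \bigcup_{i \in K}(\chi_i + \Gamma_{f(x)})$ is constant on $X_{I, K}$, so the semistability condition is also constant there, and $X^{ss}(L)$ is a union of finitely many strata $X_{I, K}$.

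The main obstacle is that this refinement depends on the choice of $L$, so one cannot yet conclude that only finitely many distinct semistable loci arise as $L$ ranges. To close this gap, I would observe that for each combinatorial type $(I, K)$, the semistability condition $0 \in \conv\bigl(\bigcup_{i \in K}(\chi_i + \Gamma_{f(x)})\bigr)$ relative to $\mathcal{C}_x$ is a linear condition on the weights $\chi_i$, and hence a linear condition on $L$ when read in a suitable finite-dimensional subspace of $\mathrm{NS}^G(X)_\RR$ (as permitted by Proposition \ref{prop:M-cont}). A wall-and-chamber argument within this subspace partitions the ample cone into finitely many rational polyhedral chambers on each of which $X^{ss}(L)$ is constant, forcing the total number of possible $X^{ss}(L)$ to be finite. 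The most delicate step, and the one I expect to be the main obstacle, is verifying local finiteness of this chamber decomposition despite $\mathrm{NS}^G(X)_\RR$ being in general infinite-dimensional; this will be handled by noting that any given $L$ only engages finitely many combinatorial types $(I, K)$ and by restricting attention to the finite-dimensional span of the relevant $\chi_i$.
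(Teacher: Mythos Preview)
Your proposal has a genuine gap, and it is precisely the one you flagged yourself: the refinement $X_{I,K}$ depends on $L$ (through the choice of generators $v_i$ of $H^0(L)$ and their weights $\chi_i$), and your proposed wall-and-chamber workaround does not close it. The difficulty is not merely technical. You want to partition the ample cone in $\mathrm{NS}^G(X)_\RR$ into finitely many chambers, but the paper explicitly refrains from claiming that $\mathrm{NS}^G(X)_\RR$ is finite-dimensional in this relative setting, and your attempt to restrict to ``the finite-dimensional span of the relevant $\chi_i$'' is ill-posed: the $\chi_i$ live in $\cX^\ast(G)_\RR$, not in $\mathrm{NS}^G(X)_\RR$, so this span is not even a subspace of the space you are trying to decompose. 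Moreover, as $L$ varies the set of weights $\{\chi_i\}$ changes in an uncontrolled way, so there is no evident single finite-dimensional subspace of $\mathrm{NS}^G(X)_\RR$ that captures the semistability condition for \emph{all} ample $L$ at once.

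The paper's proof avoids this entirely by constructing a finite stratification of $X$ that is \emph{independent of $L$}. The idea you are missing is to index strata not by which sections of $L$ vanish at $x$, but by which connected components of certain fixed-point loci the orbit closure $\overline{G\cdot x}$ meets. Concretely: for each $I\subseteq\Gamma$ whose convex hull contains $0$ as an interior point of $V_I$ (Lemma~\ref{lem:closed-orbit}), one has a subtorus $G_I$ fixing $S_I$ pointwise, and the $G_I$-fixed locus $(X_I)^{G_I}$ has finitely many connected components; collect all of these into a finite set $\Lambda$. Then stratify $X$ by pairs $(I,J)$ with $J\subseteq\Lambda$, putting $x\in X_I^J$ when $\overline{G\cdot x}$ meets exactly the components in $J$. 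The key computation (Lemma~\ref{lem:mu-value}) shows that $\mu^L(x,\lambda)$ depends only on the stratum $X_I^J$, because the weight of $\lambda$ on $L$ is constant along each connected component $Y\in\Lambda$ and $\mu^L(x,\lambda)$ is recovered as $-\min\{w^L(Y,\lambda):Y\in J\}$. Hence by the relative Hilbert--Mumford criterion $X^{ss}(L)$ is always a union of the $L$-independent strata $X_I^J$, and finiteness follows immediately.
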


The analogous statement in the setting of projective varieties is a fundamental result in VGIT, proved by Dolgachev and Hu \cite[Theorem 1.3.9 (ii)]{DH98}.

The strategy for proving Proposition \ref{prop:fin-ss2} is inspired by that of \cite[Theorem 1.3.9 (ii)]{DH98}, but the situation turns out to be substantially more involved due to the action of the group $G$ on both $X$ and 
the base $S$. More precisely, we will construct a finite stratification of $X$, and show that for any $G$-linearized ample line bundle $L$, the semistable locus $X^{ss}(L)$ is always the union of a subset of the strata.

Recall that we have a stratification $\eqref{eqn:decompS}$ of $S$ indexed by the power set of $\Gamma$. For each subset $I \subseteq \Gamma$, we write $L_I$ for the sublattice of  $\cX^\ast(G)$ spanned 
by $I$, $\conv(I)$ for the convex hull of $I$ in $\cX^\ast(G)_\RR$ and $V_I = L_I \otimes_\ZZ \RR$ the linear subspace of $ \cX^\ast(G)_\RR$ spanned by $I$. Then we have
$$ \conv(I) \subseteq V_I \subseteq \cX^\ast(G)_\RR. $$
Using the duality between $\cX^\ast(G)$ and $\cX_\ast(G)$, we can realize $L_I^\perp$ as a sublattice of $\cX_\ast(G)$. Then there exists a unique subtorus $G_I$ of $G$, such that $\cX_\ast(G_I) = L_I^\perp$. 
We start with the following observation:

\begin{lemma}
	\label{lem:id_comp_torus}
	For any point $s \in S_I$, the torus $G_I$ is the identity component $G_s^\circ$ of the stabilizer group $G_s \subseteq G$.
\end{lemma}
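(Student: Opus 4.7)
The plan is to reduce the lemma to an entirely linear computation in the ambient affine space $W$, by exploiting that the $G$-equivariant closed immersion $\iota \colon S \to W$ is in particular an injection. Equivariance and injectivity together give $G_s = G_{\iota(s)}$ for every closed point $s \in S$, so it suffices to compute the identity component of the stabilizer of $\iota(s) \in W_I$ under the linear $G$-action on $W$.

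Next, I would unpack the definition of $W_I$: for $s \in S_I$ we may write $\iota(s) = \sum_{\chi \in I} w_\chi$ with $w_\chi \in W_\chi \setminus \{0\}$ for each $\chi \in I$. Since $G$ acts on the weight component $W_\chi$ by the scalar $\chi(g)$, the equation $g \cdot \iota(s) = \iota(s)$ becomes $\chi(g) w_\chi = w_\chi$ for every $\chi \in I$, which is equivalent to $\chi(g) = 1$ for every $\chi \in I$. This yields the closed-points description
\begin{equation*}
G_s \;=\; \bigcap_{\chi \in I} \ker(\chi) \;=\; \bigcap_{\chi \in L_I} \ker(\chi),
\end{equation*}
the last equality because the set of characters trivial on a given element of $G$ is a subgroup of $\cX^\ast(G)$.

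Finally, I would pass from this description of $G_s$ to its identity component. A $1$-PS $\lambda \colon \Gm \to G$ factors through $G_s$ if and only if $\chi(\lambda(t)) = 1$ for all $t \in \Gm$ and all $\chi \in L_I$, i.e.\ $\langle \lambda, \chi \rangle = 0$ for every $\chi \in L_I$, i.e.\ $\lambda \in L_I^{\perp} \subseteq \cX_\ast(G)$. Since $\Gm$ is connected, every such $\lambda$ automatically lands in $G_s^{\circ}$. But $G_s^{\circ}$ is a connected closed subgroup of the torus $G$, hence itself a subtorus, and so is determined by the set of $1$-PS's factoring through it. Therefore $\cX_\ast(G_s^{\circ}) = L_I^{\perp} = \cX_\ast(G_I)$, which forces $G_s^{\circ} = G_I$.

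There is essentially no technical obstacle; the only thing to be careful about is the duality bookkeeping between $L_I \subseteq \cX^\ast(G)$ and $L_I^{\perp} \subseteq \cX_\ast(G)$, together with the standard fact that a subtorus of $G$ is recovered from the sublattice of $\cX_\ast(G)$ consisting of the $1$-PS's factoring through it.
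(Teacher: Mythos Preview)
Your proof is correct and follows essentially the same approach as the paper: both arguments establish the equality $\cX_\ast(G_s^\circ) = L_I^\perp$ and then conclude via the correspondence between subtori and sublattices of $1$-PS's. The paper's version is terser---it simply asserts the two inclusions ``by construction of $L_I^\perp$'' and ``by maximality of $G_s^\circ$''---whereas you spell out the explicit computation of $G_s$ as $\bigcap_{\chi \in L_I}\ker(\chi)$ via the linear action on the ambient affine space $W$, which is exactly what underlies the paper's two assertions.
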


\begin{proof}
	By \cite[Theorem in \S 16.2]{Hum75}, the identity component $G_s^\circ$ is a subtorus of $G$. We have $\cX_\ast(G_s^\circ) \subseteq L_I^\perp$ by the construction of $L_I^\perp$ 
	and $\cX_\ast(G_s^\circ) \supseteq L_I^\perp$ by the maximality of $G_s^\circ$. Hence $\cX_\ast(G_s^\circ) = L_I^\perp$, which implies $G_s^\circ = G_I$.
\end{proof}

\begin{lemma}
	\label{lem:closed-orbit}
	Let $I \subseteq \Gamma$ and $s \in S_I$ be a closed point, then the following conditions are equivalent:
	\begin{itemize}
		\item[(i)] $G \cdot s$ is a closed $G$-orbit in $S$;
		\item[(ii)] for every $\lambda \in \cX_\ast(G)$, either $\lambda$ fixes $s$ or $\lim_{t \to 0} \lambda(t) \cdot s$ does not exist;
		\item[(iii)] $\conv(I)$ as a subset of $V_I$ contains $0$ as an interior point.
	\end{itemize}
\end{lemma}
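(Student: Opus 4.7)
My plan is to transfer all three conditions to explicit statements about the weight set $I \subseteq \cX^\ast(G)$ via the $G$-equivariant closed embedding $\iota \colon S \hookrightarrow W$ into the linear $G$-representation $W = \bigoplus_{\chi \in \Gamma} W_\chi$. Writing $\iota(s) = \sum_{\chi \in I} w_\chi$ with each $w_\chi \neq 0$, the formula
\[
\lambda(t) \cdot \iota(s) = \sum_{\chi \in I} t^{\langle \lambda, \chi \rangle} w_\chi
\]
yields at once the following dictionary: $\lim_{t \to 0} \lambda(t) \cdot s$ exists if and only if $\langle \lambda, \chi \rangle \geqslant 0$ for all $\chi \in I$; the $1$-PS $\lambda$ fixes $s$ if and only if $\langle \lambda, \chi \rangle = 0$ for all $\chi \in I$, i.e., $\lambda \in L_I^\perp = \cX_\ast(G_I)$; and $G \cdot s$ is closed in $S$ if and only if $G \cdot \iota(s)$ is closed in $W$, since $\iota$ is a closed $G$-equivariant immersion.

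Through this dictionary, (ii)$\Leftrightarrow$(iii) reduces to a purely convex-geometric statement that I would settle by convex duality. Indeed, (ii) is equivalent to the closed rational polyhedral cone
\[
\{\lambda \in \cX_\ast(G)_\RR : \langle \lambda, \chi \rangle \geqslant 0 \text{ for every } \chi \in I\}
\]
being contained in (hence equal to) the linear subspace $L_I^\perp$; by the bipolar theorem this dualizes to the cone generated by $I$ filling all of $V_I$, which in turn is the standard reformulation of the assertion that $0$ is an interior point of $\conv(I)$ in $V_I$.

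For (i)$\Leftrightarrow$(ii) the implication (i)$\Rightarrow$(ii) is a short argument exploiting abelianness: if $s_0 := \lim_{t \to 0} \lambda(t) \cdot s$ exists and $G \cdot s$ is closed, then $s_0 = g \cdot s$ for some $g \in G$; applying $\lambda(t_0)$ to the limit shows that $s_0$ is $\lambda$-fixed, so the image of $\lambda$ sits in $G_{s_0} = g G_s g^{-1} = G_s$, whence $\lambda$ also fixes $s$. The reverse direction is the main obstacle, as it requires producing a $1$-PS witnessing non-closedness of an orbit: I would invoke the classical theorem of Birkes/Kempf that for a torus acting linearly on a vector space, every point in the orbit closure of $\iota(s)$ other than $\iota(s)$ itself is the limit $\lim_{t \to 0}\lambda(t)\cdot \iota(s)$ for some $1$-PS $\lambda$ which, by what was already recorded, then cannot fix $\iota(s)$. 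Applied to $\iota(s) \in W$ this yields (ii)$\Rightarrow$(i) and closes the circle of equivalences.
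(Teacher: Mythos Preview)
Your argument is essentially the same as the paper's: both use the weight dictionary via the embedding $\iota$, both invoke Birkes' theorem for (ii)$\Rightarrow$(i), both use the stabilizer comparison for (i)$\Rightarrow$(ii), and your bipolar/duality argument for (ii)$\Leftrightarrow$(iii) is just a packaged form of the separating-hyperplane reasoning the paper carries out by hand.

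One point needs tightening. Your formulation of Birkes/Kempf --- ``every point in the orbit closure of $\iota(s)$ other than $\iota(s)$ itself is the limit $\lim_{t\to 0}\lambda(t)\cdot\iota(s)$ for some $1$-PS $\lambda$'' --- is false as stated: points of $G\cdot\iota(s)$ other than $\iota(s)$ are never such limits, since any $1$-PS limit is fixed by that $1$-PS, and for an abelian $G$ the stabilizer is constant along the orbit, forcing the limit to equal $\iota(s)$. What Birkes actually gives (and what the paper cites as \cite[Theorem 3.6]{Bir71}) is weaker but sufficient: if $G\cdot s$ is not closed, then for some orbit $G\cdot y$ in the boundary there exists a $1$-PS $\lambda$ with $\lim_{t\to 0}\lambda(t)\cdot s \in \overline{G\cdot y}$. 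Since $\overline{G\cdot y}$ is disjoint from $G\cdot s$ by a dimension count, this limit is not $s$, and your contradiction with (ii) goes through. So the logic is sound once you replace your overstated version of the theorem with the correct one.
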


\begin{proof}
	First we show (i) $\Rightarrow$ (ii). If there exists some $\lambda \in \cX_\ast(G)$ such that $\lim_{t \to 0} \lambda(t) \cdot s = s_0 \neq s$, then $s_0 \notin G\cdot s$ since $s_0$ has a larger 
	stabilizer than $s$. Therefore $G \cdot s_0 \subseteq \overline{G \cdot s}$, which contradicts (i).
	
	Next we show (ii) $\Rightarrow$ (i). Assume on the contrary that there is a $G$-orbit $G \cdot y \subseteq (\overline{G \cdot s}) \backslash (G \cdot s)$, then by \cite[Theorem 3.6]{Bir71}, there exists 
	some $1$-PS $\lambda \in \cX_\ast(G)$, such that $\lim_{t \to 0} \lambda(t) \cdot s \in \overline{G \cdot y}$, which contradicts (ii).
	
	Now we show (ii) $\Rightarrow$ (iii). If (iii) does not hold, then there exists a (rational) hyperplane in $V_I$, such that $\conv(I)$ is contained in the closed half space on one side of this hyperplane in $V_I$, with 
	some elements of $I$ not on the hyperplane itself. Since $V_I$ is a (rational) linear subspace of $\cX^\ast(G)_\RR$, there exists a (rational) hyperplane in $\cX^\ast(G)_\RR$, such that $\conv(I)$ is contained 
	in the closed half space on one side of this hyperplane in $\cX^\ast(G)_\RR$, with some elements of $I$ not on the hyperplane itself. In other words, there exists some $\lambda \in \cX_\ast(G)$, such 
	that $\langle \lambda, \chi \rangle \geqslant 0$ for all $\chi \in I$, with strict inequalities for some $\chi \in I$. This implies $\lim_{t \to 0} \lambda(t) \cdot s$ exists and is not equal to $s$ itself, which contradicts (ii).
	
	Finally we show (iii) $\Rightarrow$ (ii). Consider an arbitrary $\lambda \in \cX_\ast(G)$. If $\lambda \in L_I^\perp$, then $\langle \lambda, \chi \rangle = 0$ for every $\chi \in I$, hence $\lambda$ fixes $s$. 
	Otherwise, the value of $\langle \lambda, \chi \rangle$ is positive for some $\chi \in I$ and negative for some other $\chi \in I$, therefore $\lim_{t \to 0} \lambda(t) \cdot s$ is divergent.
\end{proof}

\begin{definition} If the equivalent conditions (i) -- (iii) above hold for $I \subseteq \Gamma$, we shall say that $ I $ is \emph{centred}.
\end{definition}
Note that the property of being centred is independent of the choice of point $ s \in S_I $.

\begin{remark}
	\label{rmk:fix-or-infty}
Assume that $I$ is centred. We can make condition (ii) in Lemma \ref{lem:closed-orbit} more precise: for any $\lambda \in \cX_\ast(G)$, we have
	\begin{align*}
		\lambda \in L_I^\perp \ &\Longleftrightarrow \ \lambda \text{ fixes } s; \\
		\lambda \notin L_I^\perp \ &\Longleftrightarrow \ \lim_{t \to 0} \lambda(t) \cdot s \text{ does not exist}.
	\end{align*}
\end{remark}

Next, let $I \subseteq \Gamma$ be an arbitrary subset. By Lemma \ref{lem:id_comp_torus} $G_I$ fixes any closed point $s \in S_I$, thus every fibre $f^{-1}(s)$ is $G_I$-invariant. Since moreover the 
morphism $f: X \to S$ is projective, $X_I$ contains a non-empty $G_I$-fixed locus $(X_I)^{G_I}$. We have the following observation:

\begin{lemma}
	\label{lem:I-closed}
	The union of $(X_I)^{G_I}$ for all centred subsets $I \subseteq \Gamma$ is precisely the union of closed $G$-orbits in $X$.
\end{lemma}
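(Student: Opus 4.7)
The plan is to prove the two inclusions separately, using in both directions a strengthening of Lemma \ref{lem:closed-orbit}: the equivalence (i) $\Leftrightarrow$ (ii) stated there for closed points of $S$ is in fact a general feature of torus actions, since its proof only invokes stabilizer comparison and Birkes' theorem, with no assumption specific to $S$. Thus for \emph{any} closed point $x$ of a $G$-scheme, the orbit $G\cdot x$ is closed if and only if every non-trivial $\lambda \in \cX_\ast(G)$ either fixes $x$ or fails to have a limit $\lim_{t\to 0}\lambda(t)\cdot x$. We will combine this criterion for $x \in X$ and for $f(x)\in S$ via the projectivity of $f$.

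For the inclusion $\supseteq$, let $G\cdot x$ be a closed orbit in $X$. Properness of $f$ gives that $G\cdot f(x) = f(G\cdot x)$ is a closed orbit in $S$; writing $f(x) \in S_I$, Lemma \ref{lem:closed-orbit} forces $I$ to satisfy condition (iii). To see that $G_I$ fixes $x$, take any $\lambda \in \cX_\ast(G_I) = L_I^\perp$. Then $\lambda$ fixes $f(x)$, so $\lambda(\Gm)\cdot x$ is contained in the projective (hence complete) fibre $f^{-1}(f(x))$ and thus $\lim_{t\to 0}\lambda(t)\cdot x$ exists in $X$. The general criterion above, applied to the closed orbit $G\cdot x$, then forces $\lambda$ to fix $x$, so $x \in (X_I)^{G_I}$.

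For the reverse inclusion, take $I \subseteq \Gamma$ satisfying condition (iii) and $x \in (X_I)^{G_I}$, so in particular $f(x) \in S_I$. We verify the criterion for $x$ case by case on $\lambda \in \cX_\ast(G)$: if $\lambda \in L_I^\perp = \cX_\ast(G_I)$, then $\lambda$ fixes $x$ by hypothesis; otherwise Remark \ref{rmk:fix-or-infty} asserts that $\lim_{t\to 0}\lambda(t)\cdot f(x)$ does not exist in $S$, and by continuity of $f$ neither can $\lim_{t\to 0}\lambda(t)\cdot x$ exist in $X$ (its image under $f$ would otherwise supply the missing limit downstairs). Hence $G\cdot x$ is closed. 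The only mild subtlety, and the main conceptual point, is recognising that the closed-orbit criterion of Lemma \ref{lem:closed-orbit} is a property of torus actions in general and therefore transfers freely between $X$ and $S$ along the proper map $f$; the rest is a direct unwinding of the definitions of $G_I$, $S_I$ and $X_I$.
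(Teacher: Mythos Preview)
Your approach is sound in outline but has a gap in justification. You assert that the equivalence (i) $\Leftrightarrow$ (ii) of Lemma~\ref{lem:closed-orbit} holds for torus actions on any $G$-scheme because ``its proof only invokes stabilizer comparison and Birkes' theorem, with no assumption specific to $S$.'' The direction (i) $\Rightarrow$ (ii) does indeed use only stabilizer comparison (for an abelian group, a limit point $x_0 \neq x$ has strictly larger stabilizer, hence lies outside $G\cdot x$), and this carries over to $X$ unchanged; your argument for $\supseteq$ is therefore correct and in fact close to the paper's. But the direction (ii) $\Rightarrow$ (i), which you need for $\subseteq$, rests on Birkes' theorem, and that result is stated for \emph{affine} varieties. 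Here $X$ is projective over an affine base, not itself affine, so the transfer is not automatic. The claim \emph{is} true for torus actions on quasi-projective schemes --- one way to see it is that $\overline{G\cdot x}$ is a (possibly non-normal) toric variety, and every boundary orbit of such is reached by a one-parameter limit from the dense orbit --- but this requires an argument you have not supplied.

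The paper avoids this issue in the $\subseteq$ direction by a direct dimension count. Given $x \in (X_I)^{G_I}$, one checks that the restriction $f|_{G\cdot x} \colon G\cdot x \to G\cdot f(x)$ is \'etale (both orbits have $G_I$ as the identity component of their stabilizers, by Lemma~\ref{lem:id_comp_torus}), so $\dim G\cdot x = \dim G\cdot f(x)$. If $G\cdot x$ were not closed, any boundary orbit $G\cdot y$ would satisfy $\dim G\cdot f(y) \leqslant \dim G\cdot y < \dim G\cdot x = \dim G\cdot f(x)$; yet $G\cdot f(y) \subseteq G\cdot f(x)$ since the latter is closed, forcing $G\cdot f(y) = G\cdot f(x)$ and hence equality of dimensions --- a contradiction. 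This argument makes no appeal to Birkes-type results on $X$. Your route would be cleaner once the criterion on $X$ is properly established, but as written the paper's argument is more self-contained.
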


\begin{proof}
	First of all we observe that, if $G \cdot x$ is a closed orbit for some $x \in X$, then $G \cdot f(x)$ is also a closed orbit in $S$, since the morphism $f: X \to S$ is projective. By 
	Lemma \ref{lem:closed-orbit}, $f(x) \in S_I$ where $I$ is centred, therefore $x \in X_I$ for the same $I$. 
	
	It remains to show that, for any centred $I \subseteq \Gamma$ and any point $x \in X_I$, $G \cdot x$ is a closed $G$-orbit if and only if $x \in (X_I)^{G_I}$.
	
	For one direction, we assume that $G \cdot x$ is a closed $G$-orbit. Then $(G \cdot x) \cap f^{-1}(f(x)) = G_{f(x)} \cdot x$ is also closed in the fibre $f^{-1}(f(x))$. By Lemma \ref{lem:id_comp_torus}, $G_I \cdot x$ 
	is a connected component of $G_{f(x)} \cdot x$, thus it follows that $G_I \cdot x$ is also closed in the fibre $f^{-1}(f(x))$. Since the fibre is projective, every $1$-PS of $G_I$ must fix $x$, hence $x \in (X_I)^{G_I}$.
	
	For the other direction, we assume that $x$ is fixed by $G_I$. We claim that the restriction of $f$ to the orbit $G \cdot x$ is an \'etale map onto its image $G \cdot f(x)$. Indeed, this map can be understood as the 
	natural map from $(G/G_I) \cdot x$ to $(G/G_I) \cdot f(x)$, and the stabilizer of $f(x)$ in $G/G_I$ is finite by Lemma \ref{lem:id_comp_torus}, hence the claim follows. If the orbit $G \cdot x$ were not closed, i.e. 
	there were another orbit $G \cdot y \subseteq (\overline{G \cdot x}) \backslash (G \cdot x)$, then $G \cdot f(y) \subseteq G \cdot f(x)$ since $G \cdot f(x)$ is closed by Lemma \ref{lem:closed-orbit}. However we 
	have $\dim (G \cdot f(y)) \leqslant \dim (G \cdot y) < \dim (G \cdot x) = \dim (G \cdot f(x))$. This is a contradiction because $G \cdot f(x)$ does not contain any strictly smaller $G$-orbit.
\end{proof}

Regarding closed $G$-orbits, the following lemma will be helpful later:

\begin{lemma}
	\label{lem:closure-closed}
	For any closed point $x \in X$, the closure $\overline{G \cdot x}$ of the $G$-orbit of $x$ contains some closed $G$-orbit.
\end{lemma}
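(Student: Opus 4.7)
My plan is to apply Noetherian induction to the closed $G$-invariant subset $\overline{G \cdot x}$. Since $X$ is projective over the affine scheme $S$ of finite type over $\kk$, the underlying topological space of $\overline{G \cdot x}$ is Noetherian. Hence the non-empty collection of closed $G$-invariant subsets of $\overline{G \cdot x}$ admits a minimal element, which I will call $Z$. The goal then reduces to showing that $Z$ is itself a single $G$-orbit, since such an orbit is automatically closed and lies inside $\overline{G \cdot x}$.

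To identify $Z$ with a single orbit, I would pick any closed point $z \in Z$, which exists because $Z$ is a non-empty finite-type $\kk$-scheme. Since $Z$ is $G$-invariant and closed in $X$, the closure $\overline{G \cdot z}$ is a closed $G$-invariant subset of $Z$, so the minimality of $Z$ forces $\overline{G \cdot z} = Z$. The key step is then to show that $G \cdot z$ is open in its closure $Z$. For this I would invoke Chevalley's theorem: the orbit $G \cdot z$ is the image of the morphism $G \to X$, $g \mapsto g \cdot z$, so it is a constructible subset of $Z$, and therefore contains a non-empty open subset $U$ of $\overline{G \cdot z} = Z$. By the $G$-invariance and transitivity of the action, $G \cdot z = \bigcup_{g \in G} g \cdot U$ is a union of open sets, hence open in $Z$. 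Consequently $Z \setminus (G \cdot z)$ is a closed $G$-invariant proper subset of $Z$, and minimality of $Z$ forces it to be empty. Thus $Z = G \cdot z$ is a closed $G$-orbit contained in $\overline{G \cdot x}$.

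The main subtlety I anticipate is verifying that the orbit $G \cdot z$ is locally closed in this relative setting, where $X$ is neither assumed irreducible nor reduced. This is exactly why the argument routes through Chevalley's theorem (applicable to morphisms of finite type between Noetherian schemes) combined with the transitivity of the group action, rather than any appeal to the dimension-counting arguments that are typical for group actions on irreducible varieties. Once local closedness is established, the Noetherian minimality argument closes the proof cleanly.
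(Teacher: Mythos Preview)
Your argument is correct. It differs from the paper's proof mainly in the inductive packaging: the paper argues by induction on $\dim(G\cdot x)$, observing that if $G\cdot x$ is not closed then its boundary $(\overline{G\cdot x})\setminus(G\cdot x)$ contains a point $y$ whose orbit has strictly smaller dimension, and then applies the induction hypothesis to $y$. You instead use the Noetherian property of $X$ to pick a minimal non-empty closed $G$-invariant subset $Z\subseteq\overline{G\cdot x}$ and then show $Z$ is a single orbit via Chevalley's theorem. Both routes ultimately rest on the same fact, namely that orbits are locally closed; the paper uses this implicitly to know the boundary is a proper closed invariant subset of lower dimension, while you invoke it explicitly to conclude $Z\setminus(G\cdot z)$ is closed. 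Your version has the minor advantage of avoiding any mention of dimension, which sidesteps the (harmless but unstated) issue of what ``dimension of an orbit'' means when $X$ is not assumed reduced or irreducible; the paper's version is shorter because it takes the local-closedness of orbits for granted.
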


\begin{proof}
	We proceed by induction on the dimension of the orbit. A $0$-dimensional $G$-orbit is a point, hence always closed. Let $x \in X$ be an arbitrary closed point. If the orbit $G \cdot x$ itself is closed then the result holds.
	 Otherwise the boundary of the orbit $(\overline{G \cdot x}) \backslash (G \cdot x)$ contains an orbit $G \cdot y$ of lower dimension. By the induction hypothesis, $\overline{G \cdot y}$ contains a closed $G$-orbit, 
	 which is also in $\overline{G \cdot x}$, as desired.
\end{proof}

\begin{remark}
Combining Lemma \ref{lem:I-closed} and Lemma \ref{lem:closure-closed}, we see that there always exists at least one centred subset of $\Gamma$.
\end{remark}

Now we are ready to construct the required stratification. Since $X_I$ is quasi-projective, the $G_I$-fixed locus $(X_I)^{G_I}$ has finitely many connected components. We write $\Lambda$ for the set of all 
connected components of $(X_I)^{G_I}$ as $I$ runs over the centred subsets of $\Gamma$. Then $\Lambda$ is a finite set of quasi-projective subschemes of $X$.

For any pair of subsets $I \subseteq \Gamma$ and $J \subseteq \Lambda$, we define
$$ X_I^J = \{ x \in X_I \mid \overline{G \cdot x} \cap Y \neq \varnothing \Leftrightarrow Y \in J, \text{ for every } Y \in \Lambda \}. $$
In other words, $X_I^J$ contains points in $X_I$ with the closure of the corresponding $G$-orbits meeting only the connected components indexed by $J$. Since both $\Gamma$ and $\Lambda$ are finite sets, we 
obtain a finite stratification
\begin{equation}
	\label{eqn:new-stratification}
	X = \bigcup_{\substack{I \subseteq \Gamma \\ J \subseteq \Lambda}} X_I^J
\end{equation}
which will be used in the proof of Proposition \ref{prop:fin-ss2}. 

\begin{remark}
	In \eqref{eqn:new-stratification} we consider each stratum $X_I^J$ simply as a subset of closed points of $X$. It is not clear to us whether it is locally closed, but this is irrelevant to the subsequent discussion.
\end{remark}

From now on the line bundle will come into play. Assume $L$ is a $G$-linearized line bundle on $X$. Let $\lambda \in \cX_\ast(G)$ be a $1$-PS of $G$ and $Y \in \Lambda$ a connected component of $(X_I)^{G_I}$ 
for some centred subset $I \subseteq \Gamma$. By Remark \ref{rmk:fix-or-infty}, there are two possibilities:
\begin{itemize}
	\item if $\lambda \notin L_I^\perp$, then $\lim_{t \to 0} \lambda(t) \cdot x$ does not exist for any $x \in X_I$;
	\item if $\lambda \in L_I^\perp$, then $\lambda$ fixes all points in $(X_I)^{G_I}$; since $Y$ is a connected component, the weight of the $\lambda$-action on the fibre $L_y$ has to be constant for all 
	points $y \in Y$, which we denoted by $w^L(Y, \lambda)$.
\end{itemize}

The following observation is essential. 

\begin{lemma}
	\label{lem:mu-value}
	For any fixed stratum $X_I^J$ in the stratification \eqref{eqn:new-stratification}, let $x \in X_I^J$ be a closed point. Then for any $G$-linearized line bundle $L$ and any $1$-PS $\lambda \in \cX_\ast(G)$, we have
	$$ \mu^L(x,\lambda) = 
		\begin{cases}
		\infty & \text{ if } \langle \lambda, \chi \rangle < 0 \text{ for some } \chi \in I; \\
		- \min \{ w^L(Y, \lambda) \mid Y \in J \} & \text{ if } \langle \lambda, \chi \rangle \geqslant 0 \text{ for all } \chi \in I.
		\end{cases} $$
\end{lemma}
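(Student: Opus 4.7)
The plan is to treat the two cases separately. The first case is essentially immediate: since $f(x)\in S_I$, we have $I\subseteq\Gamma_{f(x)}$, so if some $\chi\in I$ satisfies $\langle\lambda,\chi\rangle<0$, then Lemma \ref{lem:fin-val}(iv) already rules out the existence of $\lim_{t\to 0}\lambda(t)\cdot x$ and gives $\mu^L(x,\lambda)=+\infty$.

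For the second case I would rewrite $\mu^L(x,\lambda)=-\min_{\chi\in\Gamma_x}\langle\lambda,\chi\rangle$ via Lemma \ref{lem:geom-int}, reducing the desired identity to
\begin{equation*}
    \min_{\chi\in\Gamma_x}\langle\lambda,\chi\rangle \;=\; \min_{Y\in J}w^L(Y,\lambda).
\end{equation*}
Before producing either inequality, I need to verify that $w^L(Y,\lambda)$ is well-defined for every $Y\in J$, i.e.\ that $\lambda\in L_{I'}^\perp$ whenever $Y\subseteq(X_{I'})^{G_{I'}}$ is an element of $J$. Since $f$ is projective, $f(\overline{G\cdot x})=\overline{G\cdot f(x)}$ meets $S_{I'}$, which forces $I'$ to lie in the submonoid $\Gamma_{f(x)}$ generated by $I$; hence each $\chi'\in I'$ is a non-negative combination of characters in $I$, and $\langle\lambda,\chi'\rangle\geqslant 0$ for all $\chi'\in I'$. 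Applying condition (iii) of Lemma \ref{lem:closed-orbit}, I get a strictly positive convex combination $0=\sum t_{\chi'}\chi'$ in $V_{I'}$; non-negativity of each $\langle\lambda,\chi'\rangle$ together with $\sum t_{\chi'}\langle\lambda,\chi'\rangle=0$ then forces each term to vanish, and so $\lambda\in L_{I'}^\perp$.

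Next I would prove the easy inequality $\min_{\chi\in\Gamma_x}\langle\lambda,\chi\rangle\leqslant w^L(Y,\lambda)$ for each $Y\in J$. Fixing $y\in Y\cap\overline{G\cdot x}$, $G$-equivariance of sections combined with closedness of zero loci yields $\Gamma_y\subseteq\Gamma_x$. For $\chi\in\Gamma_y$, comparing the $G_{I'}$-action on a section $v\in V_\chi$ with $v(y)\neq 0$ to the $G_{I'}$-action on $L|_y$ shows $\chi|_{G_{I'}}=\omega$, where $\omega\in\cX^\ast(G_{I'})$ is the $G_{I'}$-weight on $L|_Y$; pairing with $\lambda\in\cX_\ast(G_{I'})$ then gives $\langle\lambda,\chi\rangle=w^L(Y,\lambda)$. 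Taking minima yields $\min_{\Gamma_x}\langle\lambda,\cdot\rangle\leqslant\min_{\Gamma_y}\langle\lambda,\cdot\rangle=w^L(Y,\lambda)$.

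The delicate step is producing a single $Y^*\in J$ that realises equality. Set $x_0=\lim_{t\to 0}\lambda(t)\cdot x$; by the definition of $\mu^L$, the $\lambda$-weight on $L|_{x_0}$ equals $-\mu^L(x,\lambda)=\min_{\chi\in\Gamma_x}\langle\lambda,\chi\rangle$. Lemma \ref{lem:closure-closed} provides a closed $G$-orbit $G\cdot z\subseteq\overline{G\cdot x_0}\subseteq\overline{G\cdot x}$, and Lemma \ref{lem:I-closed} places $z$ in some $Y^*\in\Lambda$, which then lies in $J$. Applying \cite[Theorem 3.6]{Bir71} as in the proof of Lemma \ref{lem:closed-orbit}, I produce a 1-PS $\mu\in\cX_\ast(G)$ with $z=\lim_{t\to 0}\mu(t)\cdot x_0$; since $G$ is a torus, $\mu$ commutes with $\lambda$, so the curve $\{\mu(t)\cdot x_0\}$ is entirely contained in the $\lambda$-fixed locus $X^\lambda$ and provides a connected subset of $X^\lambda$ joining $x_0$ to $z$. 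The $\lambda$-weight on $L$ is an integer-valued locally constant function on $X^\lambda$, hence constant along this connected curve, so $w^L(Y^*,\lambda)$ equals the $\lambda$-weight on $L|_{x_0}$, namely $\min_{\chi\in\Gamma_x}\langle\lambda,\chi\rangle$. This final weight-transfer step, which relies on both the commutativity of torus 1-PSs and the local constancy of weights along $X^\lambda$, is where I expect the most subtle verification to sit.
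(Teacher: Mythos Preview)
Your proof is correct and follows essentially the same strategy as the paper: the divergent case via Lemma~\ref{lem:fin-val}, then in the convergent case the inequality $\Gamma_y\subseteq\Gamma_x$ for one direction and a weight-transfer along a connected $\lambda$-fixed set containing both $x_0$ and a point of a closed orbit for the other. The only differences are cosmetic: the paper observes directly that the whole orbit closure $\overline{G\cdot x_0}$ is connected and $\lambda$-fixed (so Birkes is not needed), and it leaves implicit your explicit verification that $\lambda\in L_{I'}^\perp$ for every $Y\in J$.
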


\begin{proof}
	When $\langle \lambda, \chi \rangle < 0$ for some $\chi \in I$, the condition $x \in X_I$ implies that $\lim_{t \to 0} \lambda(t) \cdot x$ does not exist, hence $\mu^L(x, \lambda) = \infty$ by Lemma \ref{lem:fin-val}.
	
	When $\langle \lambda, \chi \rangle \geqslant 0$ for all $\chi \in I$, let $ x_0 = \lim_{t \to 0} \lambda(t) \cdot x. $ We compute the value of $\mu^L(x, \lambda)$ in the following two steps.
	
	\textsc{Step 1.} We show that $-\mu^L(x, \lambda) = w^L(Y, \lambda)$ for some $Y \in J$.
	
	By Lemma \ref{lem:closure-closed}, $\overline{G \cdot x_0}$ 
	contains a closed $G$-orbit, say, $G \cdot y$. Since $G \cdot y \subseteq \overline{G \cdot x}$, there exists some $Y \in J$, such that $G \cdot y \subseteq Y$ by Lemma \ref{lem:I-closed}. Notice that $x_0$ is 
	a $\lambda$-fixed point, hence every point in $\overline{G \cdot x_0}$ is a $\lambda$-fixed point. Therefore $\lambda$ acts on each fibre of $L|_{\overline{G \cdot x_0}}$ via an integral weight, which has to be 
	constant over the entire orbit closure $\overline{G \cdot x_0}$. This in particular implies that
	$$ -\mu^L(x, \lambda) = \wt_\lambda(L_{x_0}) = \wt_\lambda(L_y) = w^L(Y, \lambda), $$
	where $\wt_\lambda(-)$ is the weight of the $\lambda$-action on the corresponding line. This finishes \textsc{Step 1}.
	
	\textsc{Step 2.} We show that $-\mu^L(x, \lambda) \leqslant w^L(Y, \lambda)$ for every $Y \in J$.
	
	For this purpose, it suffices to show that $-\mu^L(x, \lambda) \leqslant -\mu^L(z, \lambda)$ for every $z \in \overline{G \cdot x}$ such that $G \cdot z$ is a closed $G$-orbit.

In order to apply Lemma \ref{lem:geom-int}, we write
\begin{align*}
    \Gamma_x &= \{ \chi \in \cX^\ast(G) \mid [x]_\chi \neq 0 \}, \\
    \Gamma_z &= \{ \chi \in \cX^\ast(G) \mid [z]_\chi \neq 0 \}.
\end{align*}
Then we get by Lemma \ref{lem:geom-int} that
\begin{align*}
    -\mu^L(x, \lambda) &= \min \{ \langle \lambda, \chi \rangle \mid \chi \in \Gamma_x \}; \\
    -\mu^L(z, \lambda) &= \min \{ \langle \lambda, \chi \rangle \mid \chi \in \Gamma_z \}.
\end{align*}
Therefore, in order to show $-\mu^L(x, \lambda) \leqslant -\mu^L(z, \lambda)$, it suffices to show that $\Gamma_z \subseteq \Gamma_x$.

For any $\chi \in \Gamma_z$, since $[z]_\chi: V_\chi \to \kk$ is non-zero, there exists some section $\sigma \in V_\chi$, such that $\sigma(z) \neq 0$. We claim that $\sigma(x) \neq 0$. Otherwise, we 
have $\sigma(x)=0$, and for any $g \in G$, $\sigma(g \cdot x) = (g^\ast \sigma)(x) = \chi(g) \cdot \sigma(x) = 0$. This means that $\sigma = 0$ on the entire orbit $G \cdot x$, hence also on its closure $\overline{G \cdot x}$. 
This is a contradiction since $\sigma(z) \neq 0$. This verifies that $\sigma(x) \neq 0$, hence $[x]_\chi: V_\chi \to \kk$ is non-zero, so $\chi \in \Gamma_x$. This finishes \textsc{Step 2}.
	
The above two steps conclude the second case in the lemma.
\end{proof}

We are now ready to prove the main result of this subsection.

\begin{proof}[Proof of Proposition \ref{prop:fin-ss2}]
	By Lemma \ref{lem:mu-value}, we see that for any $G$-linearized line bundle $L$ on $X$, the function on the lattice of $1$-PS's
	$$ \mu^L(x, -): \cX_\ast(G) \lra \ZZ \cup \{\infty\} $$
	is the same function for all points $x \in X_I^J$. 
	
	Assume $L$ is an ample line bundle. By Theorem \ref{thm:relativeHilbMum}, (semi)stability of points in $X_I^J$ are all the same, hence $X^{ss}(L)$ must be a union of a subset of 
	strata in \eqref{eqn:new-stratification}, as desired.
\end{proof}

Finiteness moreover holds also for the stable and unstable loci, we leave the easy verification of this fact to the reader.

\begin{corollary}
\label{cor:fin-all2}
The finiteness statement in Proposition \ref{prop:fin-ss2} also holds for stable loci and unstable loci with respect to $G$-linearized ample line bundles.
\end{corollary}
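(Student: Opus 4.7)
The plan is to observe that the proof of Proposition \ref{prop:fin-ss2} already does more work than the statement records. The key point established by Lemma \ref{lem:mu-value} is not just that \emph{semistability} is constant on each stratum $X_I^J$ of the finite stratification \eqref{eqn:new-stratification}, but that the entire function
\[
\mu^L(x,-) \colon \cX_\ast(G) \lra \ZZ \cup \{\infty\}
\]
depends only on the stratum $X_I^J$ containing $x$, not on $x$ itself. I would therefore simply invoke the stratification \eqref{eqn:new-stratification} again and read off what it says about the other loci.

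For the stable locus: the relative Hilbert-Mumford criterion \cite[Corollary 1.1]{GHH-2015} characterises stability of $x$ by the strict inequality $\mu^L(x,\lambda) > 0$ for every nontrivial $\lambda \in \cX_\ast(G)$ (which subsumes the finiteness of the stabiliser, since a $1$-PS in the stabiliser would give $\mu^L(x,\lambda) = 0$). Because this condition is determined by the function $\mu^L(x,-)$ alone, and that function is constant on each stratum $X_I^J$ by Lemma \ref{lem:mu-value}, the stable locus $X^s(L)$ is again a union of a subset of the strata in \eqref{eqn:new-stratification}. Since $\Gamma$ and $\Lambda$ are finite, there are only finitely many such unions, and hence only finitely many possible stable loci.

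For the unstable locus the argument is essentially immediate: $X^{us}(L) = X \setminus X^{ss}(L)$, so finiteness of the collection of possible semistable loci from Proposition \ref{prop:fin-ss2} directly bounds the collection of possible unstable loci. Alternatively, one may argue identically to the stable case, noting that the unstable locus is likewise a union of those strata $X_I^J$ on which the function $\mu^L(x,-)$ fails the Hilbert-Mumford semistability condition. There is no real obstacle here; the corollary is essentially a restatement of the content of Lemma \ref{lem:mu-value}, once one notices that the finite stratification constructed in the proof of Proposition \ref{prop:fin-ss2} is insensitive to which level of the Hilbert-Mumford criterion one tests against.
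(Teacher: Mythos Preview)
Your proof is correct, but for the stable locus you take a different route than the paper. The paper argues directly from Definition \ref{def:new}: once the semistable locus $X^{ss}(L)$ is fixed as a subset of $X$, the stable locus is intrinsically determined as the set of points in $X^{ss}(L)$ with finite stabiliser and closed orbit in $X^{ss}(L)$ --- a description that makes no further reference to $L$. Hence finitely many possible semistable loci immediately gives finitely many possible stable loci, without revisiting the stratification or the Hilbert--Mumford criterion. Your argument instead re-runs the proof of Proposition \ref{prop:fin-ss2} with the strict inequality from \cite[Corollary 1.1]{GHH-2015}, which is equally valid and in some sense more symmetric with the semistable case, but slightly less economical. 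For the unstable locus your complement argument is exactly what the paper does.
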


\subsection{Generalization to arbitrary reductive groups}\label{subsec:reductive}
The purpose of this subsection is to demonstrate that the main results in \S\ref{subsec:contM} and \S\ref{subsec:finite-ss} remain true without assuming that $G$ is a torus. We follow mostly the arguments 
in \cite[\S 1.1]{DH98} and \cite[\S 1.2]{Res00}.

Let $G$ be an arbitrary reductive group, and $T$ a fixed maximal torus of $G$. Then every maximal torus of $G$ can be given by $g^{-1}Tg$ for some $g \in G$; see e.g. \cite[Theorem 17.87]{Mil17}. 

\begin{proposition}
	\label{prop:268reductive}
	Proposition \ref{prop:M-finite} and Proposition \ref{prop:M-cont} hold for an arbitrary reductive group $G$.
\end{proposition}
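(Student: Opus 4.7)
The strategy is to reduce the statements to the torus case already established in \S\ref{subsec:contM}. Fix a maximal torus $T \subseteq G$. Every 1-PS of $G$ has image in some maximal torus, and all maximal tori of $G$ are $G$-conjugate, so each $\lambda \in \cX_\ast(G)$ can be written as $\lambda(t) = g\lambda_T(t)g^{-1}$ for some $g \in G$ and $\lambda_T \in \cX_\ast(T)$. Using the $G$-equivariance of $L$ and the conjugation-invariance of the norm, one obtains
\begin{equation*}
\mu^L(x, g\lambda_T g^{-1}) = \mu^L(g^{-1}\cdot x, \lambda_T), \qquad \lVert g\lambda_T g^{-1}\rVert = \lVert\lambda_T\rVert,
\end{equation*}
which yields the reduction formula
\begin{equation*}
M^L(x) = \inf_{g \in G} M^L_T(g^{-1}\cdot x),
\end{equation*}
where $M^L_T$ denotes the $M$-function defined using only 1-PS's of $T$.

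For Proposition \ref{prop:M-finite}, the dichotomy is preserved under the reduction: some non-trivial 1-PS of $G$ has a limit at $x$ if and only if some 1-PS of $T$ has a limit at $g_0^{-1}\cdot x$ for some $g_0 \in G$. If no such 1-PS exists, the torus case forces $M^L_T(g^{-1}\cdot x) = +\infty$ for every $g \in G$, and hence $M^L(x) = +\infty$. Otherwise, $M^L_T(g_0^{-1}\cdot x)$ is a finite real number by the torus case, and it remains to bound the infimum over $g \in G$ from below. For this, I would use that $M^L_T$ is $T$-invariant (by commutativity of $T$, $\mu^L(ty, \lambda_T) = \mu^L(y, \lambda_T)$), so the function $g \mapsto M^L_T(g^{-1}\cdot x)$ descends to $G/T$. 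Combined with the finite stratification of $X$ constructed in \S\ref{subsec:finite-ss}, the values $M^L_T(g^{-1}\cdot x)$ fall into finitely many geometric expressions of the form \eqref{eqn:afmu}, each achieving a finite minimum on the unit sphere inside the corresponding rational polyhedral cone, exactly as in the torus case.

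For Proposition \ref{prop:M-cont}, continuity of $M^\bullet(x)$ on any finite-dimensional subspace of $\NS^G(X)_\RR$ follows formally from the reduction formula once finiteness is known: for each fixed $g$ and $\lambda_T$ the ratio $\mu^L(g^{-1}\cdot x, \lambda_T)/\lVert\lambda_T\rVert$ is linear in $L$, so $M^L(x)$ is an infimum of a family of continuous linear functions and is hence concave and continuous on the interior of its finite locus. The argument of \cite[Proposition 2]{Res00} then applies unchanged to show that $M^\bullet(x)$ descends from $\Pic^G(X)$ to $\NS^G(X)_\RR$. I expect the main obstacle to be the uniform lower bound on $\{M^L_T(g^{-1}\cdot x) : g \in G\}$ in the finiteness argument: naively, an infimum over the non-compact group $G$ might diverge to $-\infty$, so the proof crucially requires the combination of $T$-invariance (to pass to $G/T$) with the finite stratification of \S\ref{subsec:finite-ss} (to ensure only finitely many possible values arise).
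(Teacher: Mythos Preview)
Your proposal is correct and follows essentially the same route as the paper: reduce to the fixed maximal torus via the identity $M^L_G(x) = \inf_{g\in G} M^L_T(g\cdot x)$, then invoke the finite stratification of \S\ref{subsec:finite-ss} to see that $M^L_T$ assumes only finitely many values on $X$, so the infimum over $g$ cannot be $-\infty$; continuity then follows because $M^L(x)$ is an infimum of linear functions in $L$.

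Two minor points of precision. First, the detour through $G/T$ is superfluous: $G/T$ is not compact, so $T$-invariance alone buys nothing, and the paper simply observes that $M^L_T(y)$ is constant on each stratum $X^J_I$ and hence takes finitely many values as $y$ ranges over all of $X$. Second, the correct reference for this constancy is Lemma~\ref{lem:mu-value}, which shows that $\mu^L(\cdot,\lambda)$ depends only on the pair $(I,J)$ via the weights $w^L(Y,\lambda)$; the set $\Gamma_x$ appearing in \eqref{eqn:afmu} is \emph{not} itself constant on strata, so your appeal to \eqref{eqn:afmu} is slightly imprecise, though the intended conclusion is right.
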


\begin{proof}
	In this proof we need to emphasize the dependence of the function $M^L(x)$ on the group $G$. So the function defined in \eqref{eqn:M-func} will be denoted by $M^L_G(x)$. On the other hand, if we only 
	consider the nontrivial $1$-PS's of the maximal torus $T$, then the corresponding function will be denoted by $M^L_T(x)$.
	
We only need to show that $M^L_G(x)>-\infty$. Indeed, by a similar argument as in \cite[Proposition 1.1.6]{DH98} we have 
$$ M^L_G(x) = \inf_{g \in G} M^L_T(gx). $$ 
By Proposition \ref{prop:M-finite}, $M^L_T(y)> -\infty$ for every point $y \in X$. Moreover, by Proposition \ref{prop:fin-ss2}, for any fixed $G$-linearized line bundle $L$ (which in particular is $T$-linearized), the 
function $\mu^L(y, -): \cX_\ast(T) \to \ZZ \cup \{\infty\}$ stays the same when $y$ runs over all closed points of any fixed stratum $X_I^J$. It follows that $M^L_T(y)$ is a constant function on each stratum $X_I^J$, 
hence takes only finitely many different values in $\RR \cup \{\infty\}$ when $y$ runs over all closed points of $X$, which implies that $M^L_G(x) = \inf_{g \in G} M^L_T(gx) > -\infty$.
\end{proof}

\begin{proposition}
	\label{prop:21622reductive}
	Proposition \ref{prop:fin-ss2} and Corollary \ref{cor:fin-all2} hold for an arbitrary reductive group $G$.
\end{proposition}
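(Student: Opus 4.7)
The plan is to reduce the reductive case to the torus case already proved in Proposition \ref{prop:fin-ss2} by fixing a maximal torus $T \subseteq G$ and showing that $X^{ss}_G(L)$ is entirely determined by $X^{ss}_T(L)$. Any $G$-linearized ample line bundle $L$ is also $T$-linearized and ample by restriction of the action, so Proposition \ref{prop:fin-ss2} applied to $T$ (acting equivariantly on $X \to S$ through the inclusion $T \hookrightarrow G$) already tells us that the collection of possible loci $\{X^{ss}_T(L)\}$ arising in this way is finite.

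The heart of the argument is the identity
$$ X^{ss}_G(L) = \{ x \in X \mid G \cdot x \subseteq X^{ss}_T(L) \}, $$
which I would establish using two standard facts. First, every $1$-PS of $G$ has the form $g^{-1}\lambda g$ for some $g \in G$ and $\lambda \in \cX_\ast(T)$. Second, one has the conjugation-equivariance $\mu^L(x, g^{-1}\lambda g) = \mu^L(gx, \lambda)$; this follows at once from $(g^{-1}\lambda g)(t)\cdot x = g^{-1}(\lambda(t) \cdot (gx))$, which relates the two limit points by $g$, combined with the $G$-equivariance of the linearization on $L$, which identifies the $\lambda(\Gm)$-weights on the corresponding fibres. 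Combined with the relative Hilbert-Mumford criterion \cite[Corollary 1.1]{GHH-2015}, the condition $\mu^L(x, \lambda) \geqslant 0$ for all non-trivial $\lambda \in \cX_\ast(G)$ translates into $\mu^L(gx, \lambda') \geqslant 0$ for all $g \in G$ and all non-trivial $\lambda' \in \cX_\ast(T)$, which is precisely the requirement $gx \in X^{ss}_T(L)$ for every $g \in G$.

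Once this identity is in place, the map $L \mapsto X^{ss}_G(L)$ factors through $L \mapsto X^{ss}_T(L)$, so the finiteness on the $T$-side forces the finiteness on the $G$-side. The corresponding statement for stable and unstable loci is then automatic by repeating the argument of Corollary \ref{cor:fin-all2}: with $X^{ss}_G(L)$ fixed, the stable locus is cut out by the $L$-independent conditions of closedness of the $G$-orbit inside $X^{ss}_G(L)$ together with finiteness of the stabilizer, and the unstable locus is merely the complement. The only subtlety I anticipate is justifying the conjugation-equivariance of $\mu^L$ in our semi-ample framework of Definition \ref{def:new}, but this is a purely formal consequence of the fibrewise definition of $\mu^L$ together with the $G$-equivariance of $L$, and so requires no additional ampleness hypothesis; the reasoning otherwise parallels \cite[\S 1.1]{DH98} and \cite[\S 1.2]{Res00}.
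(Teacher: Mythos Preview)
Your proposal is correct and follows essentially the same route as the paper: the identity $X^{ss}_G(L) = \{x \in X \mid G \cdot x \subseteq X^{ss}_T(L)\}$ is exactly the paper's formula $X^{ss}_G(L) = \bigcap_{g \in G} g^{-1}\cdot X^{ss}_T(L)$ rewritten, and both are obtained from the conjugation-equivariance $\mu^L(x, g^{-1}\lambda g) = \mu^L(gx,\lambda)$ together with the relative Hilbert--Mumford criterion. Your closing concern about the semi-ample setting is moot here, since Proposition~\ref{prop:fin-ss2} only involves ample $L$.
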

\begin{proof}
The proof follows \cite[Remark 1.3.10]{DH98}, we omit the details.
\end{proof}

\subsection{Semi-continuity}\label{subsec:mainproof}

The semi-continuity of (semi)stable loci is a powerful result in VGIT; see e.g. \cite[Proposition 4]{Res00} as well as \cite[Lemma 3.10]{Laza-2013} and the references therein. We generalize this property to the 
relative setting, including the situations where the relevant line bundle on the central fibre is not necessarily ample. In fact, we will give three formulations to allow some flexibility in applications.

\begin{theorem}
	\label{thm:weakcont}
	Let $L_t$ be a $G$-linearized line bundle on $X$ for every $t$ satisfying $0 \leqslant t \leqslant 1$. We assume that the function $M^{L_t}(x)$ is continuous in $t$ for every point $x \in X$, and that $L_t$ is 
	ample for $0 < t \leqslant 1$.
	Then we have
	$$ X^{ns}(L_0) \subseteq X^s(L_t) \subseteq X^{ss}(L_t) \subseteq X^{nss}(L_0) $$
	 for $0 < t \ll 1$.
\end{theorem}
\begin{proof}
	We follow the proof of \cite[Proposition 4]{Res00}. The middle inclusion is obvious. We prove the other two. 
	
	\textsc{Step 1.} We
	first prove $X^{ns}(L_0) \subseteq X^s(L_t)$. 

	By Proposition \ref{prop:21622reductive}, there are finitely many possible subsets of $X$, say, $X^s_1, X^s_2, \cdots, X^s_n$, which could be realized as the stable loci for some $G$-linearized ample line bundle 
	on $X$. Namely, for each $0 < t \leqslant 1$, $X^s(L_t)$ must be one of them.

	We assume that $X^{ns}(L_0) \not\subseteq X^s_i$ for each $1 \leqslant i \leqslant p$ and $X^{ns}(L_0) \subseteq X^s_j$ for each $p+1 \leqslant j \leqslant n$. Then for each $1 \leqslant i \leqslant p$, we can find 
	some point $x_i \in X^{ns}(L_0) \backslash X^s_i$.

	We have earlier proved that $x_i \in X^{ns}(L_0)$ implies $M^{L_0}(x_i) >0$. Since $M^{L_t}(x_i)$ is continuous in $t$, there exists some $\varepsilon_i>0$, such that $M^{L_t}(x_i) >0$ for 
	every $0 < t < \varepsilon_i$. By Lemma \ref{lem:ample-HM}, we have $x_i \in X^s(L_t)$ for every $0 < t < \varepsilon_i$. Since $x_i \not\in X^s_i$, we conclude that $X^s(L_t) \neq X^s_i$ for 
	every $0 < t < \varepsilon_i$.

	Let $\varepsilon = \min \{ \varepsilon_1, \cdots, \varepsilon_p \}$. Then for every $0 < t < \varepsilon$, $X^s(L_t)$ must be one of $X^s_{p+1}, \cdots X^s_n$. In particular, we have $X^{ns}(L_0) \subseteq X^s(L_t)$ 
	for every $0 < t < \varepsilon$.

	\textsc{Step 2.} We now prove $X^{ss}(L_t) \subseteq X^{nss}(L_0)$, which can be reformulated in terms of unstable loci as $X^{nus}(L_0) \subseteq X^{us}(L_t)$.
Indeed, the proof in this step is exactly the same as in \textsc{Step 1}, if we replace every occurrence of ``(numerically) stable'' in the proof by ``(numerically) unstable'', and replace every occurrence of $M^L(x_i)>0$ in 
the proof by $M^L(x_i)<0$. This finishes the proof of the statement.
\end{proof}

The same proof applies to the following discrete version of Theorem \ref{thm:weakcont}:

\begin{theorem}
	\label{thm:disccont}
	Let $L_m$ be a $G$-linearized ample line bundle on $X$ for each positive integer $m$, and $L_\infty$ a $G$-linearized line bundle on $X$. 
	We assume
	$$ \lim_{m \to \infty} M^{L_m}(x) = M^{L_\infty}(x) $$
	for every point $x \in X$. Then we have
	$$ X^{ns}(L_\infty) \subseteq X^s(L_m) \subseteq X^{ss}(L_m) \subseteq X^{nss}(L_\infty) $$
	for $m \gg 0$. \qed
\end{theorem}

The following version is a more familiar reformulation of Theorem \ref{thm:weakcont}:

\begin{theorem}
\label{thm:maincont}
Let $L_0$ and $L_1$ be $G$-linearized line bundles on $X$, and
$$ L_t = L_0^{1-t} \otimes L_1^t $$
for any $0 \leqslant t \leqslant 1$.
Assume further that $L_t$ is ample for $0 < t \leqslant 1$.
Then we have
$$ X^{ns}(L_0) \subseteq X^s(L_t) \subseteq X^{ss}(L_t) \subseteq X^{nss}(L_0) $$
for $0 < t \ll 1$.
\end{theorem}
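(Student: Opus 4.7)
The plan is to reduce Theorem \ref{thm:maincont} directly to Theorem \ref{thm:weakcont} by verifying its sole nontrivial hypothesis, namely that $t \mapsto M^{L_t}(x)$ is continuous on $[0,1]$ for every closed point $x \in X$. The ampleness of $L_t$ for $0 < t \leqslant 1$ is given outright, and the characterization of $X^{ss}(L_0)$ and $X^s(L_0)$ in terms of $M^{L_0}$ is among the standing hypotheses, so once continuity is established Theorem \ref{thm:weakcont} applies verbatim and yields the three inclusions.

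To establish continuity, I would fix an arbitrary closed point $x \in X$ and observe that the path $t \mapsto L_t$ is affine and lies in the (at most) two-dimensional subspace of $\mathrm{NS}^G(X)_\RR$ spanned by the classes of $L_0$ and $L_1$. Two cases arise. If no nontrivial $1$-PS of $G$ admits a limit when acting on $x$, then by Proposition \ref{prop:M-finite}, together with its generalization to arbitrary reductive groups in Proposition \ref{prop:268reductive}, we have $M^{L_t}(x) = +\infty$ for every $t \in [0,1]$, and continuity is trivial. Otherwise, Proposition \ref{prop:M-cont} together with Proposition \ref{prop:268reductive} asserts that $M^{\bullet}(x)$ descends to a well-defined real-valued function on $\mathrm{NS}^G(X)_\RR$ that is continuous on every finite-dimensional subspace. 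Restricting this function to the affine path $t \mapsto (1-t)L_0 + t L_1$ in $\mathrm{span}_\RR(L_0, L_1)$ immediately gives continuity of $M^{L_t}(x)$ in $t$.

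The main subtlety concealed by this short reduction is that $L_0$ itself need not be ample, and the continuity of $M^{\bullet}(x)$ on a neighborhood of a non-ample class is precisely why Propositions \ref{prop:M-finite}, \ref{prop:M-cont}, and their reductive extension in Proposition \ref{prop:268reductive} were formulated without any ampleness hypothesis in the first place. All of the real work has therefore already been carried out in Sections \ref{subsec:contM} and \ref{subsec:reductive}, and the proof of Theorem \ref{thm:maincont} reduces to a one-line invocation of Theorem \ref{thm:weakcont} after the continuity check above.
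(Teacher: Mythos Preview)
Your proposal is correct and follows essentially the same approach as the paper's own proof: observe that the $L_t$ lie in a two-dimensional subspace of $\mathrm{NS}^G(X)_\RR$, invoke Proposition \ref{prop:M-cont} (and its reductive extension) for continuity of $M^{L_t}(x)$ in $t$, and then apply Theorem \ref{thm:weakcont}. Your write-up is slightly more explicit about the $+\infty$ case and the role of Proposition \ref{prop:268reductive}, but the substance is identical.
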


\begin{proof}
	We note that the line bundles $L_t$ are all contained in a $2$-dimensional subspace of $\mathrm{NS}^G(X)_\RR$. By Proposition \ref{prop:268reductive} the function $M^{L_t}(x)$ is continuous in $t$  for 
	every point $x \in X$.
	The claim then follows from Theorem \ref{thm:weakcont}.
\end{proof}


\section{Application to degeneration of Hilbert schemes} \label{sec:application}

In this section, we apply the general theory of the previous sections to study degenerations of Hilbert schemes in the framework developed in \cite{GHH-2016,GHHZ-2018}.

\subsection{Setup}\label{subsec:appsetup}

We first fix some notations which will be used throughout this section; more details can be found in \cite{GHHZ-2018}. 

Let $\kk$ be an algebraically closed field of characteristic $0$, $C$ a smooth affine curve over $\kk$ and $f \colon X \to C$ a projective strict simple degeneration. We do not make any assumption on the 
relative dimension of $f$. By applying Jun Li's technique of expanded degenerations, we obtain a new family
\begin{equation}
	\label{eqn:ExpandedFamily}
	f[n]: X[n] \longrightarrow C[n],
\end{equation}
where $X[n]$ and $C[n]$ are both smooth, $C[n]$ is of dimension $n+1$, and the fibres of $f[n]$ are ``expansions'' of the fibres of $f$. Moreover, the group $G= (\Gm)^n$ acts equivariantly on $X[n]$ and $C[n]$. Let
$$ g \colon \Hilb^n(X[n]/C[n]) \longrightarrow C[n] $$
be the relative Hilbert scheme and
$$ \cZ \subseteq X[n] \times_{C[n]} \Hilb^n(X[n]/C[n]) $$
the universal closed subscheme with natural projections to the two factors denoted by $q$ and $p$ respectively. Then we obtain the commutative diagram
\begin{equation*}
	\begin{tikzcd}
		\mathcal{Z} \ar[r,"q"] \ar[d,"p"'] & X[n] \ar[d,"{f[n]}"] \\
		\Hilb^n(X[n]/C[n]) \ar[r,"g"'] & C[n].
	\end{tikzcd}
\end{equation*}

Let $\cL$ be the $G$-linearized ample line bundle on $X[n]$ constructed in \cite{GHH-2016}, then for every positive integer $\ell$, we can define
$$ \cL_m = \det p_\ast q^\ast \cL^m, $$
which is an ample line bundle on $\Hilb^n(X[n]/C[n])$ for $\ell \gg 0$ by \cite[\S 2.2.1]{GHH-2016}. On the other hand, the line bundle $\cL^{\boxtimes n}$ on the $n$-fold 
product $X[n] \times_{C[n]} \cdots \times_{C[n]} X[n]$ descends to an ample line bundle $\widetilde{\cL}$ on the relative symmetric product $\Sym^n(X[n]/C[n])$ by \cite[Lemma 3.1]{GHHZ-2018}. Via the relative 
Hilbert-Chow morphism (see e.g. \cite[Paper III, \S 4.3]{Rydh})
$$ \pi \colon \Hilb^n(X[n]/C[n]) \longrightarrow \Sym^n(X[n]/C[n]), $$
we obtain a semi-ample line bundle
$$ \cL_\infty = \pi^\ast \widetilde{\cL} $$
on $\Hilb^n(X[n]/C[n])$. Notice that $\cL_\infty$ is not ample, because it is not positive on any curve that is contracted by $\pi$. The relative Hilbert scheme $\Hilb^n(X[n]/C[n])$, along with the line bundles $\cL_m$ 
and $\cL_\infty$, carries a naturally induced $G$-action. 

Recall that in \cite[Theorem 2.9]{GHH-2016}, the (semi)stable locus of the relative Hilbert scheme $\Hilb^n(X[n]/C[n])$ with respect to the $G$-linearized line bundle $\cL_m$ for sufficiently large $\ell$ was computed. 
One striking phenomenon is that, whether a point $[Z] \in \Hilb^n(X[n]/C[n])$ is (semi)stable only depends on its underlying cycle. In fact, this is not a coincidence. The goal of this section is to determine 
the (semi)stable locus of $\Hilb^n(X[n]/C[n])$ from an alternative perspective, which gives a conceptual interpretation for the irrelevance of the scheme structure of $[Z]$ to (semi)stability.

More precisely, our starting point is the relation
\begin{equation}
	\label{eqn:ssforSym}
	\Sym^n(X[n]/C[n])^{ss}(\widetilde{\cL}) = \Sym^n(X[n]/C[n])^s(\widetilde{\cL})
\end{equation}
which was proved in \cite[Proposition 3.3]{GHHZ-2018}. We will relate this to the (semi)stable locus of $\Hilb^n(X[n]/C[n])$ with respect to $\cL_m$ for $m \gg 0$ in two steps using the general theory developed 
in previous sections.

\subsection{From symmetric product to Hilbert scheme}\label{subsec:ExpStep1}

We first determine the weakly (semi)stable loci of $\HilbXC$ with respect to $\cL_\infty$.

\begin{proposition}
	\label{prop:ChangeSpace}
	We have
	\begin{align*}
		&\quad\ \HilbXC^{wss}(\cL_\infty) = \HilbXC^{ws}(\cL_\infty) \\
		&= \pi^{-1}(\SymXC^{ss}(\widetilde{\cL})) = \pi^{-1}(\SymXC^{s}(\widetilde{\cL})).
	\end{align*}
\end{proposition}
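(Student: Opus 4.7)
The middle equality follows immediately from \eqref{eqn:ssforSym}, so the key point is to establish the two outer equalities
\begin{align*}
\HilbXC^{ss}(\cL_\infty) &= \pi^{-1}\bigl(\SymXC^{ss}(\widetilde{\cL})\bigr), \\
\HilbXC^{s}(\cL_\infty) &= \pi^{-1}\bigl(\SymXC^{s}(\widetilde{\cL})\bigr).
\end{align*}
Once these are in hand, the inner equality $\HilbXC^{ss}(\cL_\infty) = \HilbXC^{s}(\cL_\infty)$ will follow for free from \eqref{eqn:ssforSym}.

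The plan is to apply Lemma \ref{lem:fun} directly to the relative Hilbert-Chow morphism, with $W = \HilbXC$, $V = \SymXC$, $f = \pi$, $L = \widetilde{\cL}$, and $G = (\Gm)^n$. Since $\cL_\infty = \pi^*\widetilde{\cL}$ by definition, the conclusion of Lemma \ref{lem:fun} is exactly what is needed. The morphism $\pi$ is manifestly $G$-equivariant and projective, so the real content is to verify the three geometric hypotheses: that $\SymXC$ is irreducible and normal, that the fibres of $\pi$ are connected, and that there is a $G$-equivariant open subset $U_V \subseteq \SymXC$ whose complement has codimension at least $2$ and over which $\pi$ restricts to an isomorphism.

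For the codimension condition, I would take $U_V \subseteq \SymXC$ to be the $G$-invariant open locus parametrising $n$ distinct unordered points in the fibres of $f[n]$, and $U_W = \pi^{-1}(U_V)$ the corresponding locus of reduced length-$n$ subschemes; on these loci $\pi$ is an isomorphism. The complement of $U_V$ in $\SymXC$ is the image of the big diagonal in $\PrXC$, which has codimension equal to the relative dimension $d$ of $f$; the hypothesis is therefore satisfied whenever $d \geq 2$. In the remaining low-dimensional cases the Hilbert-Chow morphism is itself an isomorphism and the proposition is trivial. Connectedness of the fibres of $\pi$ is a standard property of the Hilbert-Chow morphism.

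The main technical obstacle I anticipate is the verification that $\SymXC$ is irreducible and normal. Irreducibility reduces to that of the $n$-fold fibre product $\PrXC$ and follows from flatness of $f[n]$ together with irreducibility of its generic fibre. Normality is more delicate: although $X[n]$ is smooth, its fibres over $C[n]$ have strictly normal crossing singularities, so $\PrXC$ acquires singularities along the preimages of these loci. The expectation is that the toroidal structure on $f[n]\colon X[n]\to C[n]$ provided by Jun Li's expanded degeneration construction makes $\PrXC$ sufficiently mild (indeed toroidal) that its quotient by the $S_n$-action remains normal; completing this step would likely proceed by an explicit local computation or by invoking the geometric analysis of expansions carried out in \cite{GHH-2016,GHHZ-2018}.
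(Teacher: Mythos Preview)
Your overall strategy is exactly that of the paper: apply Lemma~\ref{lem:fun} to the Hilbert--Chow morphism $\pi$ and deduce the two outer equalities from \eqref{eqn:ssforSym}. You have also correctly identified the three hypotheses that need to be checked and singled out normality of $\SymXC$ as the nontrivial one.

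Where you diverge is in how you propose to handle normality. You anticipate invoking the toroidal structure on $f[n]$, but the paper gives a much more direct argument (Lemma~\ref{lem:SymNormal}): the $n$-fold fibre product $\PrXC$ is a local complete intersection inside the smooth variety $X[n]^{\times n}$ (one simply counts that the $n(n-1)$ defining equations match the codimension), and its singular locus has codimension $2$ (since the non-smooth locus of $f[n]$ itself has codimension $2$ in $X[n]$). Serre's criterion then gives normality of the fibre product, and the finite quotient $\SymXC$ inherits it. No toroidal machinery is needed.

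Two smaller points. First, the paper chooses $U_S$ to be the locus of $n$ distinct points \emph{lying in the smooth locus of $f[n]$}, not merely $n$ distinct points; this extra condition is $G$-invariant and contributes a codimension-$2$ locus to the complement, matching the analysis just mentioned. Second, your claim that ``in the remaining low-dimensional cases the Hilbert--Chow morphism is itself an isomorphism'' is not correct when $d=1$: for a nodal curve, $\Hilb^n$ is a partial resolution of $\Sym^n$, not an isomorphism. (The paper does not dwell on the case $d=1$ either, so this is a minor issue, but your stated justification for dismissing it is wrong.) For connectedness of fibres, the paper cites \cite[Proposition~2.3]{fogarty-1968}.
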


\begin{proof}
	The claims follows from \eqref{eqn:ssforSym} and Lemma \ref{lem:fun}. It remains to check that all three assumptions in Lemma \ref{lem:fun} are satisfied by $\pi$.
	
	First of all, since the morphism $f[n]$ in \eqref{eqn:ExpandedFamily} is flat, the structure morphism $\phi$ from the $n$-fold product \eqref{eqn:nfoldProd} is also flat. Moreover, since the generic f
	ibre of $\phi$ is irreducible, it follows that the $n$-fold product $X[n] \times_{C[n]} \cdots \times_{C[n]} X[n]$ is irreducible by \cite[Proposition 4.3.8]{liu-2002}. Therefore as a finite group quotient, $\Sym^n(X[n]/C[n])$ 
	is also irreducible. The normality of $\Sym^n(X[n]/C[n])$ will be proven in Lemma \ref{lem:SymNormal}.
	
	The connectivity of fibres of $\pi$ follows from \cite[Proposition 2.3]{fogarty-1968}.
	
	Finally, let $U_H$ and $U_S$ be the open subset in $\Hilb^n(X[n]/C[n])$ and $\Sym^n(X[n]/C[n])$ respectively, parametrising $n$-tuples of pairwise distinct points in the smooth locus of the morphism $f[n]$ 
	in \eqref{eqn:ExpandedFamily}. Since the smooth locus of the morphism $f[n]$ is $G[n]$-invariant, both $U_H$ and $U_S$ are also $G[n]$-invariant. It is easy to see that the complement of $U_S$ 
	in $\Sym^n(X[n]/C[n])$ is of codimension $2$, and the restriction $\pi|_{U_H}: U_H \to U_S$ is an isomorphism. Hence the desired statement follows.
\end{proof}

The following lemma was required in the above proof; it might also be of some independent interest.

\begin{lemma}
	\label{lem:SymNormal}
	The scheme $\Sym^n(X[n]/C[n])$ is normal.
\end{lemma}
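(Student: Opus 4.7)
The plan is to reduce the normality of $\SymXC$ to the normality of the $n$-fold relative product
$$ Y := \PrXC, $$
and then invoke the fact that, in characteristic zero, a finite group quotient of a normal scheme is normal. Since $\SymXC = Y/S_n$ with $S_n$ acting by permuting factors, once $Y$ is known to be normal the lemma will follow.

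To prove that $Y$ is normal I will apply Serre's criterion and verify both $S_2$ and $R_1$. For $S_2$, the morphism $f[n]\colon X[n]\to C[n]$ is flat with reduced simple normal crossing fibres, which are Cohen--Macaulay. Iteratively forming the fibre product preserves flatness and Cohen--Macaulayness of the fibres, because products over a field of Cohen--Macaulay schemes are Cohen--Macaulay; combined with smoothness of $C[n]$ (which is \'etale over $\AA^{n+1}$), this yields that $Y$ itself is Cohen--Macaulay, and in particular $S_2$.

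For $R_1$ the key geometric input is the local structure of $X[n]$. Although $X[n]$ is smooth, $f[n]$ has SNC fibres and is \'etale-locally modelled on a toric chart of the form $\{x_1 \cdots x_r = h(t)\}$. The relative fibre product $Y$ is therefore \'etale-locally a fibre product of such toric charts over $C[n]$, which is itself toroidal; the simplest nontrivial instance is the conifold $x_1 y_1 = x_2 y_2$ arising when two nodal degenerations are combined over a common base. Toroidal varieties are normal, so in particular $Y$ is $R_1$. Equivalently, one can argue by codimension: the non-smooth locus of $f[n]$ is contained in the crossing locus of the SNC fibres, which has codimension at least two in $X[n]$, and a direct computation on the local model shows that the singular locus of $Y$ still has codimension at least two in $Y$.

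The main obstacle is this last step: producing either a toroidal model or an explicit codimension bound on the singular locus of $Y$ from the \'etale-local description of the expanded family $f[n]$. Once $R_1$ and $S_2$ are established, Serre's criterion gives normality of $Y$, and the passage to the $S_n$-quotient is formal.
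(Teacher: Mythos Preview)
Your approach is correct and follows the same overall strategy as the paper: prove that the $n$-fold relative product $Y=\PrXC$ is normal via Serre's criterion, then pass to the $S_n$-quotient. The only substantive difference is in the $S_2$ step. The paper shows directly that $Y$ is a local complete intersection by embedding it in the smooth absolute product $X[n]\times\cdots\times X[n]$ and counting equations: the diagonal constraints on the $C[n]$-coordinates give $n(n-1)$ equations, which matches the codimension. You instead argue that $Y$ is Cohen--Macaulay because it is flat over the smooth base $C[n]$ with Cohen--Macaulay fibres (products of SNC, hence CM, schemes over a field). Both are valid; the LCI count is more elementary and self-contained, while your fibrewise argument is more structural. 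For $R_1$, the paper carries out precisely the codimension argument you sketch at the end: a point of $Y$ is smooth as soon as all $n$ of its coordinates lie in the smooth locus of $f[n]$, and a direct dimension count over the strata of $C[n]$ shows the complement has codimension two. Your toroidal alternative would also succeed and would in fact give normality of $Y$ in one stroke, but the paper does not take that route.
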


\begin{proof}
	Without loss of generality, we can assume that $C[n] = \AA^{n+1}$.  We first claim that 
	the singular locus of the $n$-fold product $X[n] \times_{C[n]} \cdots \times_{C[n]} X[n]$ has codimension $2$.
	For this we consider the natural morphism
	\begin{equation}
		\label{eqn:nfoldProd}
		\varphi: X[n] \times_{C[n]} \cdots \times_{C[n]} X[n] \longrightarrow C[n].
	\end{equation}
	Note that any point in $X[n] \times_{C[n]} \cdots \times_{C[n]} X[n]$ representing a tuple of $n$  points in the smooth locus of the morphism $f[n]: X[n] \to C[n]$ is a smooth point 
	in $X[n] \times_{C[n]} \cdots \times_{C[n]} X[n]$. 
	
	For any closed point $q \in C[n] = \AA^{n+1}$, the fibre $(f[n])^{-1}(q)$ is smooth if no coordinate of $q$ vanishes and singular in codimension $1$ if $q$ has at least one vanishing coordinate. Hence the intersection 
	of the singular locus of $X[n] \times_{C[n]} \cdots \times_{C[n]} X[n]$ with any closed fibre $\varphi^{-1}(q)$ is empty if $q$ has no vanishing coordinate; and of codimension $1$ otherwise. This shows the claim.

	Next we claim that $X[n] \times_{C[n]} \cdots \times_{C[n]} X[n]$ is a local complete intersection in a smooth variety.
	In order to see this we use the 
	closed embedding 
	$$\iota: X[n] \times_{C[n]} \cdots \times_{C[n]} X[n] \longrightarrow X[n] \times \cdots \times X[n].$$
	Since $X[n]$ is smooth, the target variety of $\iota$ is a smooth variety. It thus suffices to show that the number of equations required for the closed embedding $\iota$ agrees with the codimension of the 
	closed embedding $\iota$.
	
	Assume the relative dimension of $f[n]: X[n] \to C[n]$ is $k$, then we have $\dim X[n] = n+k$, hence we obtain
	\begin{align*}
		\dim X[n] \times_{C[n]} \cdots \times_{C[n]} X[n] &= n+kn, \\
		\dim X[n] \times \cdots \times X[n] &= n(n+k).
	\end{align*} 
	It follows that the codimension of the closed embedding $\iota$ is $n^2-n$.
	
	Now we take any closed point $(p_1, \cdots, p_n) \in X[n] \times_{C[n]} \cdots \times_{C[n]} X[n]$. Since $X[n]$ is smooth of dimension $n+k$, in a neighbourhood of each point $p_i \in X[n]$, we pick a local 
	chart of coordinates $\{x_{i,1}, \cdots, x_{i,n+k}\}$ on $X[n]$. The morphism $f[n]: X[n] \to C[n]$ is then given by the equations
	$$ t_j = f_{ij}(x_{i,1}, \cdots, x_{i,n+k}) $$
	for $j = 1, 2, \cdots, n$, where $t_1, \cdots, t_n$ are the coordinates of $C[n] \cong \AA^{n+1}$. It follows that locally near the point $(p_1, \cdots, p_n)$, the closed embedding $\iota$ is defined by the equations
	$$ f_{1j}(x_{1,1}, \cdots, x_{1,n+k}) = \cdots = f_{nj}(x_{n,1}, \cdots, x_{n,n+k}) $$
	for $j = 1, 2, \cdots, n$, which is a total of $n(n-1)$ equations, as claimed.
	
It now follows from Serre's $R_1+S_2$ criterion, see \cite[Proposition II.8.23]{hartshorne-1977} that $X[n] \times_{C[n]} \cdots \times_{C[n]} X[n]$ is normal. But then $\Sym^n(X[n]/C[n])$ is also normal as this 
property is preserved under finite group actions.
	\end{proof}

\subsection{From semi-ample  to ample line bundles}\label{subsec:ExpStep2}

We then determine the (semi)stable loci of $\HilbXC$ with respect to $\cL_m$ for $m \gg 0$.

\begin{proposition}
	\label{prop:ChangeBundle}
	For $m \gg 0$ we have 
	\begin{align*}
		&\quad\ \HilbXC^{wss}(\cL_\infty) = \HilbXC^{ws}(\cL_\infty) \\
		&= \HilbXC^{ss}(\cL_m) = \HilbXC^{s}(\cL_m).
	\end{align*}
\end{proposition}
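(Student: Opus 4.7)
The plan is to combine Proposition \ref{prop:ChangeSpace} with the semi-continuity result Theorem \ref{thm:maincont}, applied to the pair $(L_0, L_1) = (\cL_\infty, \cL_m)$. The decisive input will be that $\cL_\infty$ admits no strictly semistable points, which forces the chain of inclusions from Theorem \ref{thm:maincont} to collapse to equalities.

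Concretely, I would first establish the Hilbert-Mumford characterization for $\cL_\infty$. Since $\cL_\infty = \pi^\ast \widetilde{\cL}$ with $\widetilde{\cL}$ ample on $\SymXC$, and the Hilbert-Chow morphism $\pi$ was verified in the proof of Proposition \ref{prop:ChangeSpace} to satisfy the hypotheses of Lemma \ref{lem:fun}, Lemma \ref{lem:nonample-HM} gives
\begin{align*}
\HilbXC^{ss}(\cL_\infty) &= \{x : M^{\cL_\infty}(x) \geqslant 0\}, \\
\HilbXC^{s}(\cL_\infty) &= \{x : M^{\cL_\infty}(x) > 0\}.
\end{align*}
Proposition \ref{prop:ChangeSpace} then forces $X^s(\cL_\infty) = X^{ss}(\cL_\infty)$, so $M^{\cL_\infty}(x) \neq 0$ for every $x$.

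Next, for $m$ large enough that $\cL_m$ is ample, I set $L_t = \cL_\infty^{1-t} \otimes \cL_m^t$. Since $\cL_\infty$ is nef (even semi-ample) and $\cL_m$ is ample, $L_t$ is ample for every $0 < t \leqslant 1$ by the Kleiman criterion. Applying Theorem \ref{thm:maincont} with $L_0 = \cL_\infty$ and $L_1 = \cL_m$ yields
$$ X^s(\cL_\infty) \subseteq X^s(L_t) \subseteq X^{ss}(L_t) \subseteq X^{ss}(\cL_\infty) $$
for $0 < t \ll 1$, and the previous paragraph collapses all inclusions to equalities.

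The main obstacle is transferring this equality from $L_t$ with small $t$ to $\cL_m$ itself (i.e., to $t = 1$). For this I would use the asymptotic behavior of $\cL_m = \det p_\ast q^\ast \cL^m$: a Grothendieck-Riemann-Roch computation shows that the class of $\cL_m$ in $\mathrm{NS}^G(\HilbXC)_\RR$ is polynomial in $m$ with leading term a positive multiple of $[\cL_\infty]$, so the ray spanned by $\cL_m$ converges to that of $\cL_\infty$ as $m \to \infty$. Combined with the finiteness of the chamber decomposition on the ample cone (Proposition \ref{prop:fin-ss2}) and the fact just established that the chamber bordering the ray of $\cL_\infty$ has semistable locus $X^{ss}(\cL_\infty)$, it follows that $[\cL_m]$ lies in this chamber for $m \gg 0$, yielding the proposition. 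I would extract this asymptotic comparison in a preliminary lemma via a direct Riemann-Roch argument on the universal subscheme $\cZ$.
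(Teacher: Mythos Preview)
Your overall strategy---use the absence of strictly semistable points for $\cL_\infty$ to collapse a semi-continuity chain---matches the paper. The gap is in how you pass from $L_t$ with $t \ll 1$ to $\cL_m$ itself. Applying Theorem \ref{thm:maincont} with a fixed $L_1 = \cL_m$ only yields the equality for $0 < t < \varepsilon(m)$, with no control over $\varepsilon(m)$; so your Part 1 contributes nothing toward $t = 1$. Everything therefore rests on your Part 2, where you assert via Grothendieck--Riemann--Roch that the ray of $\cL_m$ in $\mathrm{NS}^G(\HilbXC)_\RR$ converges to that of $\cL_\infty$. The paper explicitly flags this limit (equation \eqref{eqn:newlimlb}) as something it believes but cannot prove rigorously. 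The difficulty is not merely the underlying numerical class: the semistable locus depends on the $G$-linearization, and GRR does not obviously track the equivariant structure. Your chamber argument also presumes a wall-and-chamber decomposition of the $G$-ample cone, whereas Proposition \ref{prop:fin-ss2} only gives finiteness of the set of possible semistable loci, not that they are separated by finitely many rational walls in $\mathrm{NS}^G_\RR$.

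The paper avoids both issues by working one level down. Rather than prove convergence in $\mathrm{NS}^G$, it computes directly on fibres that $\mu^{\cL_m/m}(z,\lambda) \to \mu^{\cL_\infty}(z,\lambda)$ for each $z$ and $\lambda$ (Lemma \ref{lem:newcmufun}): the discrepancy is exactly $\tfrac{1}{m}\wt_\lambda(\wedge^n H^0(\OO_{Z_0}))$, which is bounded independently of $m$. From this one deduces $M^{\cL_m/m}(z) \to M^{\cL_\infty}(z)$ (Lemma \ref{lem:newcMfun}), which is precisely the hypothesis of the discrete semi-continuity Theorem \ref{thm:disccont}. That theorem then gives the inclusion chain directly for $\cL_m$ with $m \gg 0$, with no interpolated family and no appeal to an asymptotic in $\mathrm{NS}^G$.
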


\begin{proof}
	We first observe that
	\begin{align*}
		&\quad\ \HilbXC^{ns}(\cL_\infty) = \HilbXC^{ws}(\cL_\infty) \\
		&= \HilbXC^{wss}(\cL_\infty) = \HilbXC^{nss}(\cL_\infty),
	\end{align*}
	where the middle equality is due to Proposition \ref{prop:ChangeSpace}, and the other two are due to Lemma \ref{lem:nonample-HM}.
	Moreover, $\cL_m$ is ample for $m \gg 0$ by \cite[\S 2.2.1]{GHH-2016}. The following Lemma \ref{lem:newcMfun} guarantees that all assumptions in Theorem \ref{thm:disccont} are satisfied. Therefore we conclude
	\begin{align*}
		&\quad\ \HilbXC^{ns}(\cL_\infty) \subseteq \HilbXC^s(\cL_m) \\
		&\subseteq \HilbXC^{ss}(\cL_m) \subseteq \HilbXC^{nss}(\cL_\infty).
	\end{align*}
	Since the first and the last sets are equal, it follows that all of them must be equal.
\end{proof}

The following results concerning the continuity of weights and $M^{\bullet}(-)$ were required in the above proof.

\begin{lemma}
	\label{lem:newcmufun}
	For any point $z \in \Hilb^n(X[n]/C[n])$, and for any $1$-PS $\lambda \in \cX_\ast(G)$, we have
	$$ \lim_{m \to \infty} \mu^{\frac{\cL_m}{m}}(z, \lambda) = \mu^{\cL_\infty}(z, \lambda). $$
\end{lemma}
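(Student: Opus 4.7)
The overall strategy is to reduce the statement to an explicit computation of $\lambda$-weights on the fiber over the limit point $z_0 = \lim_{t\to 0}\lambda(t)\cdot z$, when it exists.

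First, I would record the dichotomy that whether the limit $\lim_{t\to 0}\lambda(t)\cdot z$ exists in $\Hilb^n(X[n]/C[n])$ depends only on $\lambda$ and $z$, not on the chosen line bundle. If it does not exist, then $\mu^{\cL_m/m}(z,\lambda) = \mu^{\cL_\infty}(z,\lambda) = +\infty$ by definition, and the identity is trivial. Assuming the limit $z_0$ exists, the corresponding closed subscheme $Z_0 \subset X[n]$ is $\lambda$-invariant, so its support is a finite set of $\lambda$-fixed points $p_1,\ldots,p_k \in X[n]$ with local multiplicities $n_1,\ldots,n_k$ satisfying $\sum_i n_i = n$. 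For each $i$, set $e_i = \wt_\lambda(\cL|_{p_i}) \in \ZZ$.

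Next, I would compute each weight explicitly. For $\cL_\infty = \pi^\ast\widetilde{\cL}$, the Hilbert--Chow image $\pi(z_0) \in \Sym^n(X[n]/C[n])$ is the cycle $\sum_i n_i[p_i]$, and the construction of $\widetilde{\cL}$ as the descent of $\cL^{\boxtimes n}$ along the symmetric group quotient gives a canonical $\lambda$-equivariant identification $\widetilde{\cL}|_{\pi(z_0)} \cong \bigotimes_i (\cL|_{p_i})^{\otimes n_i}$; hence $\mu^{\cL_\infty}(z,\lambda) = -\sum_i n_i e_i$. For $\cL_m$, the universal family $p\colon \cZ \to \Hilb^n(X[n]/C[n])$ is finite and flat with fibers of length $n$, so $p_\ast q^\ast \cL^m$ is locally free with fiber $H^0(Z_0,\cL^m|_{Z_0})$ at $z_0$. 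Because the $p_i$ are pairwise distinct, this cohomology splits $\lambda$-equivariantly as $\bigoplus_i H^0(Z_{0,i},\cL^m|_{Z_{0,i}})$, where $Z_{0,i}$ denotes the component of $Z_0$ supported at $p_i$. A $\lambda$-equivariant local trivialization of $\cL$ at the fixed point $p_i$, twisted by the character of weight $e_i$, then identifies each summand $\lambda$-equivariantly with $H^0(Z_{0,i},\OO_{Z_{0,i}}) \otimes (\cL|_{p_i})^{\otimes m}$. Taking determinants and using $\dim H^0(Z_{0,i},\OO_{Z_{0,i}}) = n_i$ yields
$$\wt_\lambda(\cL_m|_{z_0}) = c + m\sum_i n_i e_i$$
for a constant $c$ independent of $m$. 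Dividing by $m$ and letting $m \to \infty$ gives the claimed identity.

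The step I expect to be most delicate is the $\lambda$-equivariant identification of $H^0(Z_{0,i},\cL^m|_{Z_{0,i}})$ with $H^0(Z_{0,i},\OO_{Z_{0,i}}) \otimes (\cL|_{p_i})^{\otimes m}$. Intuitively this is clear, since $p_i$ is a $\lambda$-fixed point where $\cL$ is locally free, so a $\lambda$-eigenvector generator of the stalk should extend to an equivariant local trivialization. Making this rigorous requires a little care in producing the equivariant trivialization on a formal neighborhood of $p_i$ and checking that it is compatible with the $\lambda$-action on $\OO_{Z_{0,i}}$; once this is in place, the rest of the argument is a routine weight calculation.
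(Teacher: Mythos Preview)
Your proposal is correct and follows essentially the same approach as the paper: both split into the cases where $\lim_{t\to 0}\lambda(t)\cdot z$ exists or not, and in the nontrivial case compute the $\lambda$-weight on the fiber $\cL_m|_{z_0}=\det H^0(Z_0,\cL^m|_{Z_0})$ by factoring out the $\cL$-contribution at each support point to obtain a term linear in $m$ plus a constant, then compare with the analogous computation for $\cL_\infty$. The paper records the key identity as $\wedge^n H^0(\OO_{Z_0}\otimes\cL^{\otimes m}) = \wedge^n H^0(\OO_{Z_0}) \otimes (\otimes_p \cL(p)^{\otimes n_p})^{\otimes m}$ with a reference to \cite[\S 2.2.2]{GHH-2016}, whereas you spell out the same decomposition via the componentwise equivariant trivialization; your flagged ``delicate'' step is precisely the point the paper treats by citation.
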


\begin{proof}
	There are two possible cases. If $\lim_{t \to 0} \lambda(t) \cdot z$ does not exist, then we have
	$$ \mu^{\frac{\cL_m}{m}}(z, \lambda) = \infty = \mu^{\cL_\infty}(z, \lambda) $$
	for every positive integer $m$. The statement follows automatically.
	
	Otherwise, assume that
	\begin{equation}
		\label{eqn:above1}
		\lim_{t \to 0} \lambda(t) \cdot z = z_0 \in \Hilb^n(X[n]/C[n])
	\end{equation}
	and assume further that $z_0$ is represented by a closed subscheme $Z_0 \subseteq X[n]$ of length $n$. We can decompose the corresponding cycle as a sum of positive multiples of distinct points
	\begin{equation}
		\label{eqn:above2}
		[Z_0] = \sum_p n_p[p].
	\end{equation}
	
	By \cite[\S 2.2.2]{GHH-2016}, the fibre of $\cL_m$ at the point $z_0$ is given by
	\begin{align*}
		\cL_m(z_0) &= \wedge^n H^0(\OO_{Z_0} \otimes \cL^{\otimes m}) \\
		&= \wedge^n H^0(\OO_{Z_0}) \otimes \left( \otimes_p \cL(p)^{\otimes n_p} \right)^{\otimes m}.
	\end{align*}
	By definition we have
	\begin{align*}
		\mu^{\cL_m}(z, \lambda) &= - \wt_\lambda(\cL_m(z_0)) \\
		&= - \wt_\lambda(\wedge^n H^0(\OO_{Z_0})) - m \cdot \sum_p n_p \cdot \wt_\lambda(\cL(p)),
	\end{align*}
	where $\wt_\lambda (-)$ represents the weight of the $\lambda$-action on the corresponding line. This immediately gives
	\begin{equation}
		\label{eqn:newweightLm}
		\mu^{\frac{\cL_m}{m}}(z, \lambda) = - \frac{1}{m} \cdot \wt_\lambda(\wedge^n H^0(\OO_{Z_0})) - \sum_p n_p \cdot \wt_\lambda(\cL(p)).
	\end{equation}
	
	On the other hand, from the construction of the line bundle $\cL_\infty$, c.f. \cite[Remark 3.2]{GHHZ-2018}, it is immediately clear that the fibre of $\cL_\infty$ at the point $z_0$ is given by
	$$ \cL_\infty(z_0) = \otimes_p \cL(p)^{\otimes n_p}. $$
	Therefore we have
	\begin{equation}
		\label{eqn:newweightLinfty}
		\mu^{\cL_\infty}(z, \lambda) = - \sum_p n_p \cdot \wt_\lambda (\cL(p)).
	\end{equation}
	
	Comparing \eqref{eqn:newweightLm} and \eqref{eqn:newweightLinfty}, we have
	$$ \lim_{m \to \infty} \mu^{\frac{\cL_m}{m}}(z, \lambda) = \mu^{\cL_\infty}(z, \lambda), $$
	as desired.
\end{proof}

\begin{lemma}
	\label{lem:newcMfun}
	For any point $z \in \Hilb^n(X[n]/C[n])$, we have
	$$ \lim_{m \to \infty} M^{\frac{\cL_m}{m}}(z) = M^{\cL_\infty}(z). $$
\end{lemma}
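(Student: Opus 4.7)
The plan is to promote the pointwise convergence of Lemma \ref{lem:newcmufun} to \emph{uniform} convergence of the normalized weights $\mu^L(z,\lambda)/\lVert\lambda\rVert$ over a compact set of $1$-PS's, from which convergence of the infima follows automatically. As a preliminary, I would reduce to the case where $G$ is a torus, exactly as in Proposition \ref{prop:268reductive}: fixing a maximal torus $T \subseteq G$, one has $M^L_G(z) = \inf_{g \in G} M^L_T(gz)$, and (the reductive version of) Proposition \ref{prop:fin-ss2} ensures that $M^L_T(-)$ is constant along each stratum of \eqref{eqn:new-stratification}, so only finitely many values $M^L_T(y_1), \ldots, M^L_T(y_s)$ arise as $g$ ranges over $G$, with the representatives $y_j$ determined by the $G$-orbit of $z$ and independent of $L$. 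Since the minimum of finitely many convergent sequences converges to the minimum of their limits, it suffices to treat the torus case.

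The key input is the explicit identity
$$ \mu^{\cL_m/m}(z,\lambda) - \mu^{\cL_\infty}(z,\lambda) = -\frac{1}{m}\wt_\lambda\bigl(\wedge^n H^0(\OO_{Z_0})\bigr), $$
which follows immediately from the formulas \eqref{eqn:newweightLm} and \eqref{eqn:newweightLinfty} in the proof of Lemma \ref{lem:newcmufun}. Writing $\chi_0 \in \cX^\ast(G)$ for the character by which the torus acts on the line $\wedge^n H^0(\OO_{Z_0})$, the right-hand side equals $-\frac{1}{m}\langle\lambda,\chi_0\rangle$, a linear function of $\lambda$. If the cone $\mathcal{C}_z$ of \eqref{def:Cx} is trivial, both sides are $+\infty$ and there is nothing to prove; otherwise I extend $\mu^L(z,-)$ to $\mathcal{C}_z$ via Lemma \ref{lem:geom-int} and use Proposition \ref{prop:M-finite} to realize each $M^L(z)$ as a minimum of a continuous function on the compact set $\mathcal{C}_z \cap \{\lVert\lambda\rVert = 1\}$. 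The $0$-homogeneity of $\langle\lambda,\chi_0\rangle/\lVert\lambda\rVert$ combined with compactness of the unit sphere supplies a constant $K$ independent of $m$ for which
$$ \sup_{\lambda \in \mathcal{C}_z,\ \lVert\lambda\rVert = 1} \left\lvert \frac{\mu^{\cL_m/m}(z,\lambda)}{\lVert\lambda\rVert} - \frac{\mu^{\cL_\infty}(z,\lambda)}{\lVert\lambda\rVert} \right\rvert \leqslant \frac{K}{m}, $$
and uniform convergence on a compact set forces convergence of the minima to the minimum of the limit function.

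The main obstacle, such as it is, amounts to bookkeeping: ensuring that the explicit weight formulas from Lemma \ref{lem:newcmufun} extend continuously to non-integral $1$-PS's in $\mathcal{C}_z$ (so the compact-set argument applies) and that the torus reduction respects the limit $m \to \infty$. Both of these are handled by the machinery already developed in Subsections \ref{subsec:contM} and \ref{subsec:reductive}, and the overall structure of the argument is essentially that of Proposition \ref{prop:M-cont}, with the single linear correction $-\frac{1}{m}\langle\lambda,\chi_0\rangle$ playing the role of the infinitesimal deformation of the linearization in the continuous setting.
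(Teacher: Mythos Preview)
Your overall strategy---uniform control of the normalized weights over the compact slice $\mathcal{C}_z\cap\{\lVert\lambda\rVert=1\}$, forcing convergence of the infima---is sound and is essentially a reformulation of the paper's argument (which parametrizes instead by $t=1/m$, observes each $f_\lambda(t)$ is affine in $t$, and uses Proposition~\ref{prop:M-finite} to conclude that $f(t)=\inf_\lambda f_\lambda(t)$ is concave and everywhere finite, hence continuous at $t=0$). Two comments, one minor and one a genuine gap.

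\textbf{Minor.} The reduction to a maximal torus is unnecessary in this application: in \S\ref{sec:application} the group is already $G=(\Gm)^n$.

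\textbf{The gap.} Your claim that $\wt_\lambda\bigl(\wedge^n H^0(\OO_{Z_0})\bigr)=\langle\lambda,\chi_0\rangle$ for a \emph{single} character $\chi_0\in\cX^\ast(G)$, and hence that the correction is linear in $\lambda$, is not correct. The limit $z_0=\lim_{t\to 0}\lambda(t)\cdot z$ depends on $\lambda$; moreover $z_0$ is in general only fixed by the one-parameter subgroup $\lambda(\Gm)$, not by the whole torus, so the line $\wedge^n H^0(\OO_{Z_0})$ need not carry a $G$-character at all. Consequently the function $\lambda\mapsto\wt_\lambda\bigl(\wedge^n H^0(\OO_{Z_0})\bigr)$ is not linear, and your bound $K$ is not yet justified.

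The fix is straightforward with the machinery you already cite. The correction term is precisely $-\mu^{\cL_0}(z,\lambda)$ for the genuine $G$-linearized line bundle $\cL_0=\det p_\ast\OO_\cZ$ on $\HilbXC$ (so that, on fibres, $\cL_m\cong\cL_0\otimes\cL_\infty^{\otimes m}$). Writing $\cL_0$ as a difference of two ample classes and invoking the argument of Proposition~\ref{prop:M-finite} shows that $\mu^{\cL_0}(z,\lambda)/\lVert\lambda\rVert$ extends to a continuous function on the compact set $\mathcal{C}_z\cap\{\lVert\lambda\rVert=1\}$, hence is bounded by some constant $K$. With this $K$ your uniform-convergence estimate is valid and the rest of your argument goes through unchanged.
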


\begin{proof}
	Without loss of generality, we assume there exists some $\lambda \in \cX_\ast(G) \backslash \{0\}$, such that $\lim_{t \to 0} \lambda(t) \cdot z$ exists; otherwise both sides are $+\infty$. 
	For any such $\lambda$, we use the notations in \eqref{eqn:above1} and \eqref{eqn:above2} to define
	$$ f_\lambda(t) = \frac{1}{\lVert \lambda \rVert} \left(- t \cdot \wt_\lambda(\wedge^n H^0(\OO_{Z_0})) - \sum_p n_p \cdot \wt_\lambda(\cL(p)) \right). $$
	We further define
	$$ f(t) = \inf_{\lambda \in \cX_\ast(G) \backslash \{0\} } f_\lambda(t). $$
	By Proposition \ref{prop:M-finite}, the function $f(t)$ achieves finite values. Moreover, since $f_\lambda(t)$ is a linear function in $t$ for each $\lambda$, as the infimum of a collection of linear functions, $f(t)$ is also 
	a continuous function in $t$. 
	
	Moreover, by \eqref{eqn:newweightLm} and \eqref{eqn:newweightLinfty} we have
	$$ M^{\frac{\cL_m}{m}}(z) = \inf_{\lambda \in \cX_\ast(G) \backslash \{0\} } \frac{\mu^{\frac{\cL_m}{m}}(z, \lambda)}{\lVert \lambda \rVert} = \inf_{\lambda \in \cX_\ast(G) \backslash \{0\} } f_\lambda\left(\frac{1}{m}\right) = f\left(\frac{1}{m}\right) $$
	and
	$$ M^{\cL_\infty}(z) = \inf_{\lambda \in \cX_\ast(G) \backslash \{0\} } \frac{\mu^{\cL_\infty}(z, \lambda)}{\lVert \lambda \rVert} = \inf_{\lambda \in \cX_\ast(G) \backslash \{0\} } f_\lambda(0) = f(0). $$
	The continuity of $f(t)$ immediately implies that
	$$ \lim_{m \to \infty} M^{\frac{\cL_m}{m}}(z) = M^{\cL_\infty}(z) $$
	as desired.
\end{proof}

\begin{remark}
Proposition \ref{prop:ChangeBundle} would also follow from Theorem \ref{thm:maincont} if we could prove
\begin{equation}
	\label{eqn:newlimlb}
	\lim_{m \to \infty} \frac{\cL_m}{m} = \cL_\infty
\end{equation}
in $\mathrm{NS}^G(\Hilb^n(X[n]/C[n]))_\QQ$. Although we strongly believe that \eqref{eqn:newlimlb} should hold, a rigorous proof is not known to us. To achieve Proposition \ref{prop:ChangeBundle}, we instead 
applied Theorem \ref{thm:disccont}. In the course of proof we used Lemmas \ref{lem:newcmufun} and \ref{lem:newcMfun}, which can be seen as a weaker version of \eqref{eqn:newlimlb}.
\end{remark}

To summarize, we obtain an alternative and more conceptual proof of \cite[Theorem 2.9]{GHH-2016}, namely

\begin{corollary}\label{cor:pullbackagree}
For $m \gg 0$ we have
	\begin{align*}
		&\quad\ \HilbXC^{ss}(\cL_m) = \pi^{-1}(\SymXC^{ss}(\widetilde{\cL}))  \\
		&= \HilbXC^{s}(\cL_m) = \pi^{-1}(\SymXC^{s}(\widetilde{\cL})).
	\end{align*}
\end{corollary}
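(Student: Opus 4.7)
The plan is to obtain this corollary by simply chaining together the two propositions just established in this section. Proposition \ref{prop:ChangeBundle} identifies the (semi)stable loci of $\HilbXC$ with respect to the ample line bundle $\cL_m$ (for $m \gg 0$) with the corresponding (semi)stable loci with respect to the semi-ample line bundle $\cL_\infty$, and also shows that these stable and semistable loci coincide. Proposition \ref{prop:ChangeSpace} in turn identifies those loci with $\pi^{-1}$ of the (semi)stable loci of $\SymXC$ with respect to the ample line bundle $\widetilde{\cL}$.

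Concretely, I would first write down the string of four equalities supplied by Proposition \ref{prop:ChangeBundle}, noting that the stable and semistable loci with respect to $\cL_m$ coincide and both agree with the stable and semistable loci with respect to $\cL_\infty$. Then I would invoke Proposition \ref{prop:ChangeSpace} to replace $\HilbXC^{ss}(\cL_\infty)$ and $\HilbXC^{s}(\cL_\infty)$ by $\pi^{-1}(\SymXC^{ss}(\widetilde{\cL}))$ and $\pi^{-1}(\SymXC^{s}(\widetilde{\cL}))$ respectively. Rearranging these identities gives exactly the four-way equality in the statement.

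There is no genuine obstacle here, since all the substantive work has already been done: the nontrivial inputs are the semi-continuity machinery (Theorem \ref{thm:disccont}) used in the proof of Proposition \ref{prop:ChangeBundle}, and the functoriality of (semi)stable loci along the Hilbert-Chow morphism (Lemma \ref{lem:fun}) used in Proposition \ref{prop:ChangeSpace}, the latter relying on the normality of $\SymXC$ established in Lemma \ref{lem:SymNormal}. The only thing to check in writing the proof is the bookkeeping: that the four sets in the display can indeed be read off from the two chains of equalities in Propositions \ref{prop:ChangeSpace} and \ref{prop:ChangeBundle}, which they can, since the shared middle term $\HilbXC^{ss}(\cL_\infty) = \HilbXC^{s}(\cL_\infty)$ is present in both.
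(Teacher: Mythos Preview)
Your proposal is correct and takes essentially the same approach as the paper, which simply states that the corollary is a combination of Propositions~\ref{prop:ChangeSpace} and~\ref{prop:ChangeBundle}. Your explanation of how the shared middle term $\HilbXC^{ss}(\cL_\infty) = \HilbXC^{s}(\cL_\infty)$ links the two chains of equalities is exactly the bookkeeping the paper leaves implicit.
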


\begin{proof}
	This is a combination of Propositions \ref{prop:ChangeSpace} and \ref{prop:ChangeBundle}.
\end{proof}

\bibliography{hilbdeg}{}
\bibliographystyle{alpha}

\end{document}